\documentclass[12pt]{amsart}
\usepackage[utf8]{inputenc} \usepackage[T1]{fontenc}
\usepackage{lmodern}
\usepackage[margin=2.5cm, centering]{geometry}

\usepackage[english]{babel} 
\usepackage{geometry}
\usepackage{caption}
\usepackage{subcaption}
\usepackage{mathrsfs,amsmath,amsfonts,amstext,amssymb,amsthm,amsopn,amsthm,color, dsfont,bm,bbm}
\usepackage{enumitem}
\usepackage[titletoc]{appendix}
\usepackage[dvipsnames]{xcolor}

\usepackage{thmtools}
\usepackage[colorlinks=true, linkcolor=blue, citecolor=black]{hyperref}
\addto\extrasenglish{}
\addto\extrasenglish{}

\declaretheorem[name=Theorem]{theorem}
\newtheorem{lemma}[theorem]{Lemma}
\newtheorem{proposition}[theorem]{Proposition}
\newtheorem{corollary}[theorem]{Corollary}

\declaretheoremstyle[bodyfont=\normalfont]{remark-style}

\declaretheorem[name={Example}, style=remark-style, sibling=theorem]{example}

\newcommand{\R}{\mathbb{R}}

\newcommand{\E}{\mathbb{E}}
\renewcommand{\P}{\mathbb{P}}

\renewcommand{\leq}{\leqslant} 

\renewcommand{\geq}{\geqslant} 
\renewcommand{\ge}{\geq}
\renewcommand{\Re}{\operatorname{Re}}

\def\({\left(} 
\def\){\right)} 
\def\[{\left[}
\def\]{\right]} 
\def\<{\langle} 
\def\>{\rangle}

\def\Ca{C_4}
\def\Cb{C_3}
\def\Cf{C_5}
\def\Cc{C_1}
\def\Cd{C_2}

\title{First exit times from a bounded interval for L\'{e}vy processes}
\author{Tomasz Grzywny}
\thanks{The research was partially supported by  National Science Centre (Poland): grant 2015/17/B/ST1/01043}
\address{
	Tomasz Grzywny\\
	 {Wydzia{\l}} Matematyki\\
	Politechnika {Wroc{\l}awska}\newline
	Wyb. {Wyspia\'{n}skiego} 27,
	50-370 {Wroc\l{}aw}\\
	Poland}
\email{tomasz.grzywny@pwr.edu.pl}
\date{}
\begin{document}
\begin{abstract}
In this paper we study the mean of the first exit time from a bounded interval of  various L\'evy processes. 
We establish sharp two-sided estimates of the mean  for L\'evy processes under certain condition on their characteristic exponents.  
We also study the cumulative distribution function of the supremum and infimum processes.  
Finally, we establish integral conditions that assure that the renewal function of the ladder height process is comparable with the linear one.  
\end{abstract}

\maketitle

\section{Introduction and preliminaries}

The expected exit time $x \mapsto \E^x \tau_D$ of a Markov process $X$ from an open
bounded set $D$  is a very important function for both probability and analysis  point of view and for applications in finance and insurance. This function is a barrier for $D$ (superharmonic inside and vanishes
outside, near a part of the boundary of the set) and knowing its estimates let studying the  behaviour of solutions to the Dirichlet problem \cite{MR1801253,MR3482695} and the behaviour of the underlying process near the boundary \cite{MR2006232,MR3271268}. We shall effectively estimate this function for a bounded interval and quite general L\'evy processes on the real line having paths of unbounded variation.  To this end we use the renewal functions $V$ and $\hat{V}$ of the ascending and descending  ladder-height processes. 
The idea of using $V$ in the context of potential theory comes from  \cite{MR2928332} and  turned out to be very fruitful and allowed to prove a lot of interesting results for symmetric L\'{e}vy and Markov processes.

In this work we consider a L\'{e}vy process $X_t$  in $\mathbb{R}$ that it is not a compound Poisson.
By $\psi$ we denote its L\'{e}vy-Khintchine exponent
\begin{align*}
\E e^{i X_t \xi} = e^{-t\psi( \xi )}, \hspace{0.5cm}  \xi \in \R.
\end{align*}
which is of the form \cite[Theorem 8.1]{MR3185174}
\begin{align*}
\psi({\xi}) =  \sigma^2\xi^2-i\gamma\xi + - \int_{\R}  \left( e^{i\xi x} - 1 - ix\xi\mathds{1}_{(-1,1)}(x) \right) \nu(dx), 
\end{align*}
where $\gamma \in \R$,  $\sigma \geq 0$ and a measure $\nu(dx)$, called a L\'{e}vy measure, satisfies 
\begin{align*} \hspace{1cm} \int_{\R} \left( 1 \wedge x^2 \right) \nu(dx) < \infty.
\end{align*}
Our main result (Theorem \ref{thm:main}) provide the two-sided sharp estimates of the expected the first exit time from a bounded interval
$$\tau_{(a,b)}=\inf\{t>0:X_t\notin (a,b)\},\quad a<b$$
for processes with paths of unbounded variation that oscillate at infinity. In fact we assume the weak lower scaling condition on the real part of the characteristic exponent.  
The main result may be summarized as follows
\begin{equation}\label{eq:intro}\E^x\tau_{(a,b)}\approx V(b-x)\hat{V}(x-a),\quad a<x<b,\end{equation}
where $\approx$ means that both
sides are comparable i.e. their ratio is bounded between two positive constants. The definitions of $V$ and $\hat{V}$ are provided in Section \ref{sec:fluctuation}. We should note that the renewal functions are defined
implicitly but in the considered  setting its product enjoys simple sharp estimates in
terms of more elementary functions like the Lévy–Khintchine exponent and the
Pruitt’s concentration function $h$ (Corollary \ref{cor:HapproxV}). In addition, we believe that such estimate  will be useful since  the renewal function is non-decreasing and subadditive. In fact in our settings it satisfies the weak lower scaling property (Lemma \ref{lem:Vscal}). The main tool is estimates of the cumulative distribution functions of the supremum process $\sup_{s\leq t}X_s$ and the infimum $\inf_{s\leq t}X_t$ obtained by Kwa\'{s}nicki, Ma\l{}ecki and Ryznar \cite{MR3098066}. In our setting we provide more explicit and useful form of their estimates (Theorem \ref{prop:CDF_sup_inf}).  To this end we obtain the weak scaling property and estimates for the Laplace exponents of the ascending and descending ladder time processes (Corollaries \ref{cor:kappa_sc}  and \ref{cor:kappa_est}). It was possible due to the estimates of the transition density obtained recently in \cite{2019_TG_KS}. To obtain estimates on the renewal function we provide an integral condition equivalent to existence the non-zero drift term of the ladder height process (Proposition \ref{prop:creeping}). It is not of full generality but it refutes Doney and Maller's conjecture \cite[Remark 6]{MR1894105}. The results seem to be valuable in themselves because they break a bit of the barrier of using fluctuation theory for people from potential theory and shed some light on the general situation. We use them to derive estimates and asymptotics of the tail probability of the first hitting time of a point or an interval in the forthcoming paper \cite{2019_TG_LL}. 

  The explicit formula for the mean of the first exit time from a bounded interval is very rare. It is known for strictly  $\alpha$-stable processes (\cite{MR137148,MR0102754,MR3618135}) and in this case
	\begin{equation}\label{eq:stable}\E^x\tau_{(a,b)}=c V(b-x)\hat{V}(x-a)= \frac{(b-x)^{\alpha \rho}(x-a)^{\alpha(1-\rho)}}{\Gamma(1+\alpha)},\end{equation}
	where $\rho=\P(X_1>0)$. The estimates in form of \eqref{eq:intro} for symmetric L\'{e}vy processes were obtained in \cite{MR3007664} and next using the concentration function in \cite{MR3350043}. To the author best knowledge the estimates in our generality are not known.

For $r>0$ we define  the {\it concentration function}
$$
h(r)=\frac{\sigma^2}{r^{2}}+\int_{\mathbb{R}}\left(1\wedge\frac{|x|^2}{r^2}\right)\nu(dx)\,.
$$ 
and the drift part
\begin{align}\label{def:br}
b_r=\gamma+\int_{\mathbb{R}} x \left(\mathds{1}_{(-r,r)}(x) - \mathds{1}_{(-1,1)}(x)\right)\nu(dx)\,.
\end{align}
It is easy to observe that
\begin{equation}\label{eq:h_sc_general}
\lambda^2h(\lambda r)\leq h(r),\quad r>0,\,\, 0<\lambda\leq 1.
\end{equation}
By \cite[Lemma 4]{MR3225805}
$$\frac{h(r)}{24}\leq \sup_{|x|\leq 1/r}\Re\psi(x)\leq 2 h(r),\quad r>0.$$
More properties of $h$ can be found in \cite{2019_TG_KS}. 
For $X_t$ we denote its \textit{dual process} by $\widehat{X}_t $. Notice that $ \widehat{X}_t :=2x -X_t$ with respect to $\mathbb{P}^x$ and notice that the concentration function for the dual process is the same as for $X_t$. Every functions corresponding to the dual process we indicate by $\hat{\cdot}$.  

Let $f$ be a positive function on $(0,\infty)$.  We say that
$f$ satisfies the {\it weak lower scaling condition} $f\in\mathrm{WLSC}(\alpha, \theta)$, if $\alpha>0$, $\theta>0$, such that
$$
 f(\lambda x)\geq
\theta\lambda^{\alpha} f(x),\quad \lambda\ge 1,\, x>0.
$$

Throughout the paper by $c,c_1,\dots$ we denote non-negative constants which may depend on parameters only that we indicate $c=c(\ldots)$.  The value of  constants may change from line to line in a chain of estimates.  If we use $C,C_1,\ldots$, then they are fixed.
\section{Fluctuation theory}\label{sec:fluctuation}
By $\mathbb{P}$ we denote $\mathbb{P}^0$.
Let us introduce fundamental objects of the fluctuation theory for L\'evy processes on the real line. Let $L_t$ be the local time of the process $X_t$ reflected at its supremum $M_t =  \sup_{s \leq t} X_s$ and denote by $L^{-1}_s$ the right-continuous inverse of $L_t$, the ascending ladder-time process for $X_t$. This is a (possibly killed) subordinator and $H_s = X_{L_s^{-1}} = M_{L_s^{-1}}$, called the ascending ladder-height process. The Laplace exponent of the ascending ladder process, that is, the bivariate subordinator $(L_s^{-1}, H_s), \ (s < L(\infty)) $, is denoted by $\kappa(z, t)$. By \cite[Corollary VI.10]{MR1406564},
\begin{align*}
\kappa(z, t) = c\exp \left( \int_0^\infty \int_0^\infty (e^{-s}- e^{-zs-tx})s^{-1} \P(X_s \in dx)ds \right)
\end{align*}
For the sake of simplicity assume that $c=1$. Let us denote $\widehat{\kappa}$ as the Laplace exponent  the ascending ladder process for the dual process. Since $\P(X_t=0)=0$ for every $t>0$  it is easy to see that  $\kappa(z,0)\widehat{\kappa}(z,0) = z$. Indeed 
\begin{align*}
z  = &  \exp \left(  \int_0^\infty \left (e^{-s}- e^{-zs} \right)s^{-1}ds \right) =\exp \left(  \int_0^\infty \left (e^{-s}- e^{-zs} \right)s^{-1}  \left( \P(X_s \geq 0) + \P({X}_s \leq 0) \right)ds \right)\\ =&\kappa(z,0)\widehat{\kappa}(z,0) .
\end{align*}

By $V(x) = \int_0^\infty \P (H_s \leq x)ds$ we denote the renewal function of the process $H_s$. We have  $$\mathcal{L}V(\lambda)=\frac{1}{\kappa(0,\lambda)},\quad \lambda>0.$$
We notice that $V$ is subadditive and non-decreasing, therefore 
$$V(\lambda x)\leq 2\lambda V(x),\quad x>0, \,\lambda>1.$$

Using Vigon's equation we obtain the following bound.
\begin{lemma}\label{lem:VVhat_upper}There exists a constant $\Cc$ such that for any non-Poisson L\'{e}vy processes 
$$ V(r)\widehat V(r)\leq \frac{\Cc}{h(r)},\quad r>0.$$
\end{lemma}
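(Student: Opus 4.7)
The plan is to reduce the lemma to a lower bound on $\kappa(0,1/r)\widehat{\kappa}(0,1/r)$, and then derive that lower bound from the L\'evy--Khintchine exponent $\psi$ via the Wiener--Hopf factorization (which is one of Vigon's identities, the $q=0$ analogue of the relation $\kappa(z,0)\widehat{\kappa}(z,0)=z$ already displayed). The reduction step is the following: since $\mathcal{L}V(\lambda)=1/\kappa(0,\lambda)$, the monotone bound
\[
V(r)=\int_{[0,r]}V(dx)\leq e\int_0^\infty e^{-x/r}V(dx)=\frac{e}{\kappa(0,1/r)},
\]
together with its analogue for $\widehat{V}$, reduces the lemma to proving $\kappa(0,1/r)\widehat{\kappa}(0,1/r)\geq c\,h(r)$ for some absolute $c>0$.

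For the latter bound I would apply Vigon's Wiener--Hopf identity $\psi(\xi)=\kappa(0,-i\xi)\widehat{\kappa}(0,i\xi)$ and take moduli: $\Re\psi(\xi)\leq|\psi(\xi)|=|\kappa(0,-i\xi)|\cdot|\widehat{\kappa}(0,i\xi)|$. Each factor on the right is controlled by its real-axis counterpart: because $\kappa(0,\cdot)$ is the Laplace exponent of the subordinator $H$, a term-by-term comparison in its L\'evy--Khintchine representation, using the elementary inequality $|1-e^{i\xi u}|\leq\tfrac{2e}{e-1}(1-e^{-|\xi|u})$, gives $|\kappa(0,-i\xi)|\leq C\,\kappa(0,|\xi|)$, and the same for $\widehat{\kappa}$. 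Since $\kappa(0,\cdot)$ and $\widehat{\kappa}(0,\cdot)$ are nondecreasing, for every $|\xi|\leq 1/r$
\[
\Re\psi(\xi)\leq C\,\kappa(0,1/r)\widehat{\kappa}(0,1/r).
\]
Taking the supremum over $|\xi|\leq 1/r$ and invoking the Pruitt-type bound $h(r)/48\leq\sup_{|\xi|\leq 1/r}\Re\psi(\xi)$ recorded in the introduction yields the required lower bound, and combining with the first step gives $V(r)\widehat{V}(r)\leq e^2/(\kappa(0,1/r)\widehat{\kappa}(0,1/r))\leq C_1/h(r)$.

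The key technical point is the transfer from the single value $\Re\psi(\xi)$ to the product $\kappa(0,|\xi|)\widehat{\kappa}(0,|\xi|)$ along the positive real axis. The Bernstein-function modulus comparison bridges the gap between imaginary and real arguments of each Laplace exponent, and the monotonicity of the two ladder Laplace exponents is exactly what matches the supremum in the Pruitt-type estimate for $h(r)$; once both elementary observations are in place, the rest is formal. Note that the proof makes no use of any scaling hypothesis on $\psi$, so it applies in the full generality of the statement (any non-compound-Poisson L\'evy process).
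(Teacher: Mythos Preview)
Your argument is correct and takes a genuinely different route from the paper's. The paper works at the level of L\'evy measures: it applies Vigon's \emph{\'equation amicale} $\eta(x,\infty)=\int_0^\infty\nu(y+x,\infty)\,\widehat V(dy)$ to bound $\int_0^t\eta(x,\infty)\,dx$ from below by $\widehat V(t)\int_0^t x\,\nu(x,\infty)\,dx/(8t)$, combines this with $\int_0^t\eta(x,\infty)\,dx\lesssim t/V(t)$ (Bertoin, Prop.~III.1), symmetrizes to capture the jump part of $h$, and then treats the Gaussian part separately via $\sigma^2=\delta\widehat\delta\leq t^2\kappa(0,1/t)\widehat\kappa(0,1/t)$. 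Your proof instead stays entirely at the level of exponents: the Wiener--Hopf factorization $\psi(\xi)=\kappa(0,-i\xi)\widehat\kappa(0,i\xi)$, the Bernstein-function modulus bound $|\kappa(0,-i\xi)|\leq C\,\kappa(0,|\xi|)$, and the comparison $h(r)\lesssim\sup_{|\xi|\leq 1/r}\Re\psi(\xi)$ combine in one stroke, handling diffusion and jumps uniformly with no case split. Your route is shorter and more conceptual; the paper's route has the mild advantage of yielding a pointwise relation between the tail $\eta(x,\infty)$ and $\widehat V(x)\nu(2x,\infty)$, which is more explicit structural information even if it is not used elsewhere. A small terminological note: what you invoke is the classical Wiener--Hopf factorization rather than one of Vigon's \'equations amicales proper; the distinction is worth keeping since the paper's proof uses the latter.
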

\begin{proof}
Let $\delta$ be a drift and  $\eta$ be a L\'evy measure of $H_t$. By Vigon's equation \cite[Theorem 16]{MR2320889}, for $x>0$,
\begin{align*}
\eta(x, \infty) = & \int_0^\infty  \nu(y+x, \infty)\widehat V(dy)
			   \geq  \int_0^{x} \widehat V(dy) \nu(2x, \infty) = \widehat V(x)\nu(2x, \infty).
\end{align*}
By subadditivity and monotonicity of $\widehat V$ we have $ \widehat{V}(t)\leq 2 \widehat{V}(x)t/x$. Therefore
\begin{align*}
\int^t_0\eta(x, \infty)dx&\geq\int^t_0\widehat V(x)\nu(2x, \infty)dx\geq \frac{\widehat V(t)}{2t}\int^t_0 x\nu(2x,\infty)dx\\&\geq \frac{\widehat V(t)}{8t}\int^t_0 x\nu(x,\infty)dx.
\end{align*}
A consequence of \cite[Proposition III.1]{MR1406564} is
$$\int^t_0\eta(x, \infty)dx\leq c \, t \kappa(0,1/t)\approx \frac{t}{V(t)},\quad t>0.
$$
Hence
\begin{align*}
c_1\geq \frac{V(t)\widehat V(t)}{t^2}\int^t_0 x\nu(x,\infty)dx
\end{align*} 
By the same argument for $\widehat{H}_t$ we obtain
\begin{align*}
2c_1\geq \frac{V(t)\widehat V(t)}{t^2}\int^t_0 x[\nu(x,\infty)+\nu(-\infty,-x)]dx=\frac{V(t)\widehat V(t)}{2}\int_{\mathbb{R}}1\wedge(|x|^2/t^2)\nu(dx).
\end{align*}
Recall that
$\sigma^2=\delta\widehat \delta$  (see \cite[Corollary 4]{MR2320889}). That is
$$\frac{\sigma^2}{t^2}= \frac{\delta\widehat \delta}{t^2}\leq \kappa(0,1/t)\widehat \kappa(0,1/t) \approx \frac{1}{V(t)\widehat V(t)}.$$
These imply the claim.
\end{proof}

Weak scaling of the Laplace exponent of a subordinator is very useful property. We examine this property for the ladder time process.
\begin{lemma}\label{lem:kappa_scal}
Assume that there are $\rho>0$ and $T\in(0,\infty]$ such that $\P(X_t\geq0)\leq \rho$ for $t<T$,  then there is an absolute constant $c>0$ such that
$$ \kappa(\lambda z,0)\leq c\lambda^{\rho} \kappa(z,0), \quad \lambda>1,\, z\geq 1/T.$$
\end{lemma}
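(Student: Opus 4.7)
The plan is to take logarithms and exploit the explicit integral representation for $\kappa(z,0)$ already displayed in the excerpt. Setting $t=0$ in that formula and using $\P(X_s=0)=0$ to reduce the inner integral to $\P(X_s\geq 0)$, one has
$$\log\kappa(z,0)=\int_0^\infty\bigl(e^{-s}-e^{-zs}\bigr)\,s^{-1}\,\P(X_s\geq 0)\,ds.$$
Writing this at $\lambda z$ and $z$, subtracting, and changing variables $u=zs$ yields
$$\log\kappa(\lambda z,0)-\log\kappa(z,0)=\int_0^\infty\bigl(e^{-u}-e^{-\lambda u}\bigr)\,u^{-1}\,\P(X_{u/z}\geq 0)\,du.$$
Combined with the Frullani identity $\log\lambda=\int_0^\infty(e^{-u}-e^{-\lambda u})u^{-1}\,du$, the inequality reduces to controlling
$$\mathcal{I}(z,\lambda):=\int_0^\infty\bigl(e^{-u}-e^{-\lambda u}\bigr)\,u^{-1}\,\bigl[\P(X_{u/z}\geq 0)-\rho\bigr]\,du.$$

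I would split $\mathcal{I}(z,\lambda)$ at $u=zT$, adopting the convention $zT=\infty$ when $T=\infty$. On $\{u<zT\}$, one has $u/z<T$, so the assumption gives $\P(X_{u/z}\geq 0)\leq\rho$; since $\lambda>1$ implies the weight $(e^{-u}-e^{-\lambda u})u^{-1}\geq 0$, the integrand is pointwise non-positive on this range. On $\{u\geq zT\}$, which is nonempty only when $T<\infty$, one only has the trivial bound $\P(X_{u/z}\geq 0)-\rho\leq 1-\rho\leq 1$; but here the assumption $z\geq 1/T$ forces $u\geq zT\geq 1$, so $(e^{-u}-e^{-\lambda u})u^{-1}\leq e^{-u}$ uniformly in $\lambda$.

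Combining the two pieces gives
$$\log\kappa(\lambda z,0)-\log\kappa(z,0)-\rho\log\lambda\;=\;\mathcal{I}(z,\lambda)\;\leq\;\int_1^\infty e^{-u}\,du\;=\;e^{-1},$$
and exponentiating produces $\kappa(\lambda z,0)\leq e^{1/e}\lambda^\rho\kappa(z,0)$ with an absolute constant, as claimed. The only delicate point is the treatment of the tail region $u\geq zT$, where the hypothesis on $\P(X_t\geq 0)$ no longer applies; the choice of splitting point together with the lower bound $z\geq 1/T$ confines this region to $u\geq 1$, where the rapid decay of $e^{-u}$ renders the error both finite and independent of $\lambda$. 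Everything else is a routine consequence of the Bertoin formula and Frullani.
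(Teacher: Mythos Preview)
Your proof is correct and follows essentially the same route as the paper: take the ratio $\kappa(\lambda z,0)/\kappa(z,0)$ via the Bertoin integral formula, bound $\P(X_s\geq 0)$ by $\rho$ on $\{s<T\}$ to invoke Frullani, and control the remaining tail using $z\geq 1/T$ to push it into a region where the exponential weight is uniformly integrable. The only cosmetic differences are your explicit change of variables $u=zs$ and the resulting constant $e^{1/e}$, which is slightly sharper than the paper's $\exp\bigl(\int_1^\infty e^{-s}s^{-1}\,ds\bigr)$.
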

\begin{proof}First we consider $T=\infty$.
We have, by the Frullani integral, 
\begin{align*}
\frac{\kappa(\lambda z,0)}{\kappa( z,0)}&= \exp \left( \int_0^\infty  \left (e^{-zs}- e^{-\lambda zs} \right)s^{-1}  \P(X_s \geq 0) ds \right) \\&\leq \exp\left(\rho \int_0^\infty \left (e^{-zs}- e^{-\lambda zs} \right)s^{-1}  ds \right)=\lambda ^{\rho}.
\end{align*}
If $T<\infty$ we have for $z>1/T$
\begin{align*}
\frac{\kappa(\lambda z,0)}{\kappa( z,0)}&= 
\exp \left( \int_0^\infty  \left (e^{-zs}- e^{-\lambda zs} \right)s^{-1}  \P(X_s \geq 0) ds \right) \\&\leq \exp\left(\rho \int_0^\infty  \left (e^{-zs}- e^{-\lambda zs} \right)s^{-1}  ds + \int^\infty_T (e^{-zs}- e^{-\lambda zs} )s^{-1}  ds\right)\\&\leq \lambda ^{\rho}\exp\left(\int^\infty_Te^{-s/T}s^{-1}ds\right)=\lambda ^{\rho}\exp\left(\int^\infty_1e^{-s}s^{-1}ds\right).
\end{align*}
\end{proof}

Using the lower bound of the heat kernel obtained recently in \cite{2019_TG_KS} we obtain weak scaling properties of the Laplace exponents of ladder time processes.  
\begin{corollary}\label{cor:kappa_sc}
Assume that $\Re\psi\in \mathrm{WLSC}(\alpha,\theta)$ with $\alpha>1$, then there exists $\rho\in[1/2,1)$ such that, for $\lambda, z\geq 1$, 
\begin{align*}
 c^{-1}\lambda^{1-\rho} \kappa(z,0)\leq\kappa(\lambda z,0)& \leq c\lambda^{\rho} \kappa(z,0),  \\
 c^{-1}\lambda^{1-\rho} \widehat{\kappa}(z,0)\leq \widehat{\kappa}(\lambda z,0)& \leq c\lambda^{\rho} \widehat{\kappa}(z,0).
\end{align*}
If we additionally assume that $\mathbb{E}X_1=0$, then the above inequalities hold for every $z>0$, $\lambda\geq 1$ with $c=1$ and $\rho$ depending only on $\alpha$ and $\theta$.
\end{corollary}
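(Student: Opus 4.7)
The plan is to reduce the two one-sided scaling inequalities to Lemma \ref{lem:kappa_scal}, applied to $X_t$ and to its dual, and to produce the required positivity exponent $\rho$ from the transition density estimates of \cite{2019_TG_KS}. The key probabilistic input will be a uniform non-degeneracy bound $\P(X_t\geq 0),\,\P(X_t\leq 0)\leq \rho$ for some $\rho\in[1/2,1)$ on the relevant range of $t$; once this is in place, Lemma \ref{lem:kappa_scal} yields the upper scaling bounds, and the identity $\kappa(z,0)\widehat{\kappa}(z,0)=z$ converts each upper bound into the matching lower bound for the other process.

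First I would extract the non-degeneracy bound. Under $\Re\psi\in\mathrm{WLSC}(\alpha,\theta)$ with $\alpha>1$, the heat kernel estimates from \cite{2019_TG_KS} give two-sided control on the density of $X_t$ in a neighbourhood of the origin, with the value at $0$ comparable to $1/h^{-1}(1/t)$. Integrating the lower bound over a small symmetric interval around $0$ shows that both $\P(X_t>0)$ and $\P(X_t<0)$ are bounded below by a positive constant uniformly in $t\in(0,1)$; since $\P(X_t=0)=0$, this produces some $\rho\in[1/2,1)$ with $\P(X_t\geq 0),\,\P(X_t\leq 0)\leq \rho$ for every $t<1$. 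Lemma \ref{lem:kappa_scal} applied with $T=1$ then delivers $\kappa(\lambda z,0)\leq c\lambda^\rho\kappa(z,0)$ for $z,\lambda\geq 1$, and, applied to the dual process $\widehat{X}_t=-X_t$ (for which $\P(\widehat{X}_t\geq 0)=\P(X_t\leq 0)\leq\rho$), it yields the corresponding upper bound for $\widehat{\kappa}(\cdot,0)$. The lower bounds are immediate from $\kappa(\lambda z,0)\widehat{\kappa}(\lambda z,0)=\lambda z$: dividing by the upper bound on $\widehat{\kappa}(\lambda z,0)$ gives $\kappa(\lambda z,0)\geq c^{-1}\lambda^{1-\rho}\kappa(z,0)$, and symmetrically for $\widehat{\kappa}$.

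For the mean-zero refinement, the plan is to run the same argument with $T=\infty$. The assumption $\E X_1=0$ rules out drift dominating the fluctuations at any time scale, and under WLSC with $\alpha>1$ the estimates of \cite{2019_TG_KS} then extend the two-sided bound on the density of $X_t$ at the origin to all $t>0$ with constants depending only on $\alpha$ and $\theta$. The same integration argument gives $\P(X_t\geq 0),\,\P(X_t\leq 0)\leq \rho(\alpha,\theta)$ uniformly in $t>0$, so the $T=\infty$ case of Lemma \ref{lem:kappa_scal} (which produces $c=1$) applies and yields the inequalities for every $z>0$. The main obstacle throughout is the non-degeneracy step, which rests entirely on invoking the heat kernel estimates of \cite{2019_TG_KS}; once that input is in hand, the rest is a formal consequence of Lemma \ref{lem:kappa_scal} and the duality identity $\kappa(z,0)\widehat{\kappa}(z,0)=z$.
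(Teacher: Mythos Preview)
Your proposal is correct and follows essentially the same approach as the paper: obtain the non-degeneracy bound $\P(X_t>0),\P(X_t<0)\geq\eta$ from the heat kernel estimates of \cite{2019_TG_KS} (the paper cites Proposition~6.1 and Remark~3.2 there directly), apply Lemma~\ref{lem:kappa_scal} with $\rho=1-\eta$ to both $X$ and $\widehat{X}$ for the upper bounds, and convert to lower bounds via $\kappa(z,0)\widehat{\kappa}(z,0)=z$. For the mean-zero refinement the paper makes the drift control explicit---$\E X_1=0$ gives $b_r=\int_{|z|\geq r}z\,\nu(dz)$, whence $t|b_{h^{-1}(1/t)}|\leq c\,h^{-1}(1/t)$ for all $t>0$ by \cite[Lemma~2.10]{2019_TG_KS}, which is precisely the ingredient needed to rerun the proof of Proposition~6.1 globally---so your phrase ``rules out drift dominating the fluctuations at any time scale'' is the right intuition but should be made quantitative in exactly this way.
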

\begin{proof}
By \cite[Proposition 6.1 and Remark 3.2]{2019_TG_KS} we get for $t\leq 1$, 
\begin{equation}\label{eq:prob_bounds1}\mathbb{P}(X_t>0)\geq\eta \quad \text{and}\quad   \mathbb{P}(X_t<0)\geq\eta\end{equation}
for some $\eta\in(0,1/2]$. Hence  a consequence of Lemma \ref{lem:kappa_scal} applying to $\kappa$ and $\widehat{\kappa}$ with $\rho=1-\eta$ is
\begin{align*}
 \kappa(\lambda z,0)& \leq c\lambda^{\rho} \kappa(z,0),  \\
 \widehat{\kappa}(\lambda z,0)& \leq c\lambda^{\rho} \widehat{\kappa}(z,0),
\end{align*}
for $z>1$. Since $\kappa(z,0)\widehat{\kappa}(z,0)=z$
\begin{align*} \lambda\widehat{\kappa}( z,0){\kappa}( z,0)&= \lambda z= \widehat{\kappa}(\lambda z,0){\kappa}(\lambda z,0)
\leq c \lambda^\rho \widehat{\kappa}(z,0)\kappa(\lambda z,0).
\end{align*}
If  $\mathbb{E}X_1=0$ then $b_r=\int_{|z|\geq r}zN(dz)$ (see \eqref{def:br}). Hence by \cite[Lemma 2.10]{2019_TG_KS} $t|b_{h^{-1}(1/t)}|\leq c h^{-1}(1/t)$ for all $t>0$ and one can repeat the proof of Proposition 6.1 in \cite{2019_TG_KS} to obtain \eqref{eq:prob_bounds1} for every $t>0$ with $\eta=\eta(\alpha,\theta)$. Applying Lemma \ref{lem:kappa_scal} ends the proof.
\end{proof}

\section{Main results}
In this section we present the main results of the article. At first we derive estimates of the cumulative distributions function of the supremum and infinimum processes or equivalently tails of  the distribution for the first exit time from the half-lines $(0,\infty)$ and $(-\infty,0)$.  Then we prove optimal bounds for the expected value of the first exit time from a bounded interval.

\begin{proposition}\label{prop:exit_upper}
 Let $X_t$ be a non-Poisson and $R>0$. Then, for $0<x<R$,
\begin{align*}
\E^x \tau_{(0,R)} \leq  \widehat{V}(x)V(R).
\end{align*}
\end{proposition}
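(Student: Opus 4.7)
The strategy is to reduce the bound to a Green-measure computation on the half-line $(0,\infty)$ and then to apply the classical representation of that Green measure in terms of $V$ and $\widehat V$.

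For the reduction, let $T = \inf\{t > 0 : X_t \leq 0\}$. Whenever $s < \tau_{(0,R)}$ one has $X_s \in (0,R) \subset (0,\infty)$, so $s < T$; therefore
\begin{align*}
\E^x \tau_{(0,R)} = \E^x \int_0^{\tau_{(0,R)}} \mathds{1}_{X_s \in (0,R)}\, ds \leq \E^x \int_0^{T} \mathds{1}_{X_s \in (0,R)}\, ds = U^+(x,(0,R)),
\end{align*}
where $U^+(x,\cdot) = \E^x \int_0^T \mathds{1}_{X_s \in \cdot}\, ds$ denotes the potential measure of $X$ killed on exiting $(0,\infty)$.

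For the second step I would invoke the Silverstein-type identity (see e.g.\ \cite{MR1406564}), which states that for $x,y > 0$,
\begin{align*}
U^+(x, dy) = \int_{[0,\,x \wedge y]} \widehat{V}(dz)\, V(dy - z).
\end{align*}
This is derived by excursion theory at the running supremum together with time reversal / the duality between $X$ and $\widehat X$. Integrating in $y \in (0,R)$ by Fubini,
\begin{align*}
U^+(x,(0,R)) = \int_{[0,x]} \widehat{V}(dz)\, V(R-z) \leq V(R)\,\widehat{V}(x),
\end{align*}
where the inequality uses that $V$ is non-decreasing. Combining the two displays yields the claim.

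The main delicacy is to state the Silverstein identity precisely when $V$ or $\widehat V$ carries an atom at $0$, which can occur if the corresponding ladder height subordinator has positive drift. This point is standard in the literature and can be cited directly; a self-contained alternative is to first prove the $q$-discounted version of the identity (a routine consequence of the Wiener-Hopf factorization applied to an independent exponential time), and then pass to $q \to 0^+$ by monotone convergence.
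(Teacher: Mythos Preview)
Your proof is correct and essentially identical to the paper's: both bound $\E^x\tau_{(0,R)}$ by the occupation measure of the process killed on leaving $(0,\infty)$, invoke the convolution representation of that Green measure in terms of $V$ and $\widehat V$ (the paper cites it as \cite[Theorem~VI.20]{MR1406564}), and then bound the resulting double integral by $V(R)\widehat V(x)$ using monotonicity. The only cosmetic difference is that the paper works directly with the functional form $\E^x\int f(X_t)\,dt=\int_0^\infty V(dy)\int_0^x\widehat V(dz)\,f(x+y-z)$ rather than your disintegrated version $U^+(x,dy)$.
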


\begin{proof}
According to \cite[Theorem VI.20]{MR1406564}, for any measurable function $f: [0,\infty) \to [0, \infty) $, we have 
\begin{align*}
\E^x \left[\int^\infty_0 f(X_t)dt \right] = \int_0^\infty V(dy) \int_0^x \widehat{V}(dz) f(x+y-z)
\end{align*}
We apply it to a characteristic function of an interval $[0,R]$. We have
\begin{align*}
\E^x \tau_{(0,R)} = &  \ \E^x \left[ \int_0^{\tau_{(0,R)}} \textbf{1}_{[0,R]}(X_t) dt \right] \leq \E^x \left[\int^\infty_0 \textbf{1}_{[0,R]}(X_t)dt \right]  \\=& \int_0^\infty V(dy) \int_0^x \widehat V(dz) \textbf{1}_{[0,R]}(x+y-z) 
			   \leq  \int_0^R V(dy) \int_0^x \widehat V(dz) \\=& V(R) \widehat{V}(x).
\end{align*}

\end{proof}
Combining the above proposition with Pruit's bounds of the expectation of the first exit time from a ball when the process start  from its center \cite{MR632968} we get the following result.
\begin{corollary}\label{cor:HapproxV}
Let $X_t$ be a non-Poisson. Then there exists an absolute constant $\Cd>0$ such that
$$\frac{\Cd}{h(r)+|b_r|/r}\leq  \widehat{V}(r)V(r),\quad r>0.$$
If we additionally assume that $\Re\psi\in \mathrm{WLSC}(\alpha,\theta)$ with $\alpha>1$ and $\mathbb{E}X_1=0$, then
$$\frac{1}{h(r)}\approx \widehat{V}(r)V(r),\quad r>0,$$
where the comparability constant depends only on $\alpha,\,\theta$.
\end{corollary}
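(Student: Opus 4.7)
The plan is to derive the lower bound on $\widehat{V}(r)V(r)$ by pairing the upper bound on $\mathbb{E}^0 \tau_{(-r,r)}$ supplied by Proposition \ref{prop:exit_upper} with Pruitt's classical lower bound \cite{MR632968} on the expected exit time from a centered ball. By spatial homogeneity of $X_t$, I have $\mathbb{E}^0\tau_{(-r,r)} = \mathbb{E}^r \tau_{(0,2r)}$, so Proposition \ref{prop:exit_upper} (applied with $x=r$ and $R=2r$) gives
$$\mathbb{E}^0\tau_{(-r,r)} \;\le\; \widehat{V}(r)\,V(2r) \;\le\; 2\,\widehat{V}(r)\,V(r),$$
the last step using the subadditivity of $V$ recorded in Section \ref{sec:fluctuation}. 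Pruitt's estimate, on the other hand, yields $\mathbb{E}^0\tau_{(-r,r)} \ge c\,(h(r)+|b_r|/r)^{-1}$ with an absolute constant $c$. Combining these two bounds produces the first assertion with $\Cd = c/2$.

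For the second assertion, Lemma \ref{lem:VVhat_upper} already supplies the matching upper bound $\widehat V(r)V(r) \le \Cc/h(r)$, so via the first assertion it remains to show that $|b_r|/r \lesssim h(r)$ with constant depending only on $\alpha,\theta$. This, however, is exactly the content carried over from the proof of Corollary \ref{cor:kappa_sc}: the hypothesis $\mathbb{E}X_1=0$ lets me rewrite $b_r = \int_{|z|\ge r} z\,\nu(dz)$, and \cite[Lemma 2.10]{2019_TG_KS} then asserts $t\,|b_{h^{-1}(1/t)}| \le c\,h^{-1}(1/t)$ for all $t>0$; taking $t=1/h(r)$ and rearranging gives $|b_r| \le c\, r\, h(r)$, hence $|b_r|/r \le c\, h(r)$. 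Plugging this into the first assertion collapses $h(r)+|b_r|/r$ to a constant multiple of $h(r)$ and yields the two-sided comparability, with a comparability constant depending only on $\alpha$ and $\theta$.

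The main obstacle, in my view, is not conceptual but bookkeeping. One must invoke Pruitt's one-dimensional bound in the precise form with denominator $h(r)+|b_r|/r$ (rather than $h(r)$ alone) and verify that its constant is truly absolute, so that $\Cd$ in the statement is independent of the process. One must also be certain that \cite[Lemma 2.10]{2019_TG_KS} delivers the needed control on $|b_r|$ under the sole assumption $\Re\psi\in\mathrm{WLSC}(\alpha,\theta)$ with $\alpha>1$ together with $\mathbb{E}X_1=0$—as it does, since that is precisely the way it is used in the proof of Corollary \ref{cor:kappa_sc}. Beyond these two bibliographic checks, the argument is a direct assembly of Proposition \ref{prop:exit_upper}, Pruitt's centered-ball estimate, and Lemma \ref{lem:VVhat_upper}.
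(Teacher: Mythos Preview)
Your proposal is correct and follows essentially the same route as the paper: combine Pruitt's lower bound $\E^r\tau_{(0,2r)}\ge c\,(h(r)+|b_r|/r)^{-1}$ with the upper bound from Proposition~\ref{prop:exit_upper} and subadditivity of $V$ for the first claim, then invoke Lemma~\ref{lem:VVhat_upper} and the bound $|b_r|\le c\,r\,h(r)$ (extracted from the proof of Corollary~\ref{cor:kappa_sc} via $\E X_1=0$ and \cite[Lemma~2.10]{2019_TG_KS}) for the second. Your write-up merely spells out the spatial-homogeneity identification $\E^0\tau_{(-r,r)}=\E^r\tau_{(0,2r)}$ and the substitution $t=1/h(r)$ that the paper leaves implicit.
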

\begin{proof}
By Pruitt's result   \cite{MR632968} and Proposition \ref{prop:exit_upper} $$\frac{c}{h(r)+|b_r|/r}\leq  \E^{r}\tau_{(0,2r)}\leq \widehat{V}(r)V(2r),\quad r>0.$$
The subadditivity of $V$ ends the proof of the first claim. 

If $\Re\psi\in \mathrm{WLSC}(\alpha,\theta)$ with $\alpha>1$ and $\mathbb{E}X_1=0$ by the proof of Corollary \ref{cor:kappa_sc} we get that there exists a constant $c$ such that $|b_r|\leq c\, rh(r)$, $r>0$. Hence the second claim is a consequence of Lemma \ref{lem:VVhat_upper}.
\end{proof}

Recall that, by  \cite{MR632968}, there exists an absolute constant $\Cb$ such that for any L\'{e}vy process $X_t$, for $r>0$,
\begin{align} \label{v5}
\P\left(\sup_{0\leq s \leq t} |X_s| \geq r \right)  \leq \Cb t(h(r)+|b_r|/r), \quad \P\left(\sup_{0\leq s \leq t} |X_s| \leq r\right) & \leq \frac{\Cb}{t(h(r)+|b_r|/r)}.
\end{align}

\begin{theorem}
\label{prop:CDF_sup_inf}Assume that $\Re\psi\in \mathrm{WLSC}(\alpha,\theta)$ with $\alpha>1$ and $\mathbb{E}X_1=0$, then there exists a constant $\Ca=\Ca(\alpha,\theta)$ such that, for $x,\,t>0$, 
\begin{align*}
\Ca^{-1} \min \{1, \frac{V(x)}{V(h^{-1}(1/t))} \} &\leq \P (\sup_{s\leq t}X_s < x) \leq \Ca \min \{1, \frac{V(x)}{V(h^{-1}(1/t))} \},\\
\Ca^{-1} \min \{1, \frac{\widehat{V}(x)}{\widehat{V}(h^{-1}(1/t))}\} &\leq \P (\inf_{s\leq t}X_s>- x) \leq \Ca\min\{1, \frac{\widehat{V}(x)}{\widehat{V}(h^{-1}(1/t))}\}.
\end{align*} 
\end{theorem}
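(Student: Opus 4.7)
The plan is to take the general two-sided estimates of Kwa\'snicki, Ma\l{}ecki and Ryznar \cite{MR3098066} as a black box and simplify them under our stronger hypotheses ($\Re\psi\in\mathrm{WLSC}(\alpha,\theta)$ with $\alpha>1$ and $\E X_1=0$). The KMR estimates give $\P(\sup_{s\le t}X_s<x)$ in a form that involves the Laplace exponent $\kappa(\cdot,0)$ of the ascending ladder-time process evaluated at $1/t$, together with $V(x)$, and the whole job is to rewrite that combination in the closed form $\min\{1,V(x)/V(h^{-1}(1/t))\}$.

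First I would use the Tauberian identity $\mathcal{L}V(\lambda)=1/\kappa(0,\lambda)$ in tandem with Corollary \ref{cor:kappa_sc}: under the additional hypothesis $\E X_1=0$, the second part of that corollary yields two-sided weak scaling of $\kappa(\cdot,0)$ on all of $(0,\infty)$ with absolute constants, which is enough to upgrade the Laplace transform identity to the pointwise comparison
\[
V(r)\approx \frac{1}{\kappa(0,1/r)},\qquad r>0,
\]
where the easy direction uses monotonicity of $V$ and the hard direction uses the upper scaling of $\kappa$. Combining this with the factorisation $\kappa(\lambda,0)\widehat\kappa(\lambda,0)=\lambda$ and Corollary \ref{cor:HapproxV} (which gives $V(r)\widehat V(r)\approx 1/h(r)$, whence $V(h^{-1}(1/t))\,\widehat V(h^{-1}(1/t))\approx t$) produces the key identification $t\,\kappa(1/t,0)\approx V(h^{-1}(1/t))$. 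Plugging this into the KMR bound collapses it to $\min\{1,V(x)/V(h^{-1}(1/t))\}$, which is the first claim. The infimum estimate then follows by applying the same argument to the dual process $\widehat X$: the concentration function $h$ and the hypothesis $\Re\psi\in\mathrm{WLSC}(\alpha,\theta)$ are preserved by duality, and $\E\widehat X_1=0$, so Corollary \ref{cor:kappa_sc} applies to $\widehat\kappa$ and the roles of $V$ and $\widehat V$ are simply swapped.

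The main obstacle I expect is the passage from the KMR estimate (stated implicitly via $\kappa$) to a clean pointwise comparison written in terms of $V$, because without two-sided scaling of the Laplace exponent such a conversion is generally false. This is precisely where the hypothesis $\E X_1=0$ is used: it promotes the one-sided bound of Lemma \ref{lem:kappa_scal} to the two-sided weak scaling on all of $(0,\infty)$ with absolute constants provided by Corollary \ref{cor:kappa_sc}, which in turn justifies the Tauberian-type comparison between $V(r)$ and $1/\kappa(0,1/r)$. Once that comparison is available, the remainder is a routine bookkeeping exercise using the identity $\kappa\widehat\kappa(\lambda,0)=\lambda$ and Corollary \ref{cor:HapproxV}; in particular the saturation at $1$ in the $\min$ matches the trivial upper bound and the lower bound $\P(\sup_{s\le t}|X_s|<h^{-1}(1/t))\gtrsim 1$ that can be read off Pruitt's inequalities \eqref{v5} once one uses $|b_r|\le c\,r\,h(r)$ (valid under our hypotheses, as noted in the proof of Corollary \ref{cor:HapproxV}).
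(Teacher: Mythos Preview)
Your overall strategy matches the paper's: quote the KMR bounds in the form $\P(\sup_{s\le t}X_s<x)\approx\min\{1,\kappa(1/t,0)V(x)\}$ (the scaling of $\kappa(\cdot,0)$ from Corollary~\ref{cor:kappa_sc} is indeed what \cite[Corollary~3.2]{MR3098066} requires), and then reduce everything to showing $\kappa(1/t,0)\,V(h^{-1}(1/t))\approx 1$ together with its dual. The ingredients you list --- the factorisation $\kappa(\lambda,0)\widehat{\kappa}(\lambda,0)=\lambda$, Corollary~\ref{cor:HapproxV}, and Pruitt's inequalities --- are exactly those the paper uses. But the way you assemble them has a gap.

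First a minor confusion: the Tauberian comparison $V(r)\approx 1/\kappa(0,1/r)$ involves the ladder-\emph{height} exponent $\kappa(0,\cdot)$, whereas Corollary~\ref{cor:kappa_sc} gives scaling of the ladder-\emph{time} exponent $\kappa(\cdot,0)$; one does not justify the other. (The Tauberian relation holds for any subordinator by \cite[Proposition~III.1]{MR1406564}, but in fact it is not needed here.)

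The real issue is your ``key identification'' $t\,\kappa(1/t,0)\approx V(h^{-1}(1/t))$. From the factorisation and Corollary~\ref{cor:HapproxV} you can only extract the \emph{product}
\[
\bigl[\kappa(1/t,0)\,V(h^{-1}(1/t))\bigr]\cdot\bigl[\widehat{\kappa}(1/t,0)\,\widehat{V}(h^{-1}(1/t))\bigr]
=\frac{V(h^{-1}(1/t))\,\widehat{V}(h^{-1}(1/t))}{t}\approx 1,
\]
and the Tauberian input links $V$ to $\kappa(0,\cdot)$, not to $\kappa(\cdot,0)$, so it does nothing towards separating the two factors. An independent one-sided bound on one factor is needed. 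This is precisely where Pruitt enters in the paper's proof, and not merely for the ``saturation at $1$'' as you suggest: one chooses $t_0\approx 1/h(\lambda)$ so that $\P(\sup_{s\le t_0}|X_s|\ge\lambda)\le 1/2$ via \eqref{v5}, and then feeds $\P(\sup_{s\le t_0}X_s<\lambda)\ge 1/2$ and its dual back into the KMR \emph{upper} bound to obtain $\kappa(1/t_0,0)V(\lambda)\ge 1/4$ and $\widehat{\kappa}(1/t_0,0)\widehat{V}(\lambda)\ge 1/4$. Combined with the product $\approx 1$ this pins down both factors, and Corollary~\ref{cor:kappa_sc} transfers from $1/t_0$ to $h(\lambda)$. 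So Pruitt is not a cosmetic endpoint adjustment but the step that breaks the symmetry; once you move it to the right place your argument becomes the paper's.
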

\begin{proof}
The following bounds are a consequence of Corollary \ref{cor:kappa_sc} and \cite[Corollary 3.2]{MR3098066}, for $t,x>0$, 
\begin{align}\label{prop:CDF_sup_inf_1}
c_1 \min \{1, \kappa(1/t,0)V(x) \} &\leq \P (\sup_{s\leq t}X_s < x) \leq \min \{1, 2 \kappa(1/t,0)V(x) \},\\
c_1 \min \{1, \widehat{\kappa}(1/t,0)\widehat{V}(x) \} &\leq \P (\inf_{s\leq t}X_s >- x) \leq \min \{1, 2 \widehat{\kappa}(1/t,0)\widehat{V}(x) \},\nonumber
\end{align}
 where $c_1$ depends only on the scaling characteristics. Hence it remains to prove that 
$$\kappa(\lambda,0) V(h^{-1}(\lambda))\approx\widehat{\kappa}(\lambda,0) \widehat{V}(h^{-1}(\lambda))\approx 1,\quad \lambda>0.$$
By  \eqref{v5}, for $t_0= 1/\left(2\Cb (h(\lambda)+|b_\lambda|/\lambda)\right)$, we have 
\begin{align*}
\P\left(\sup_{0\leq s \leq t_0} |X_s| \geq \lambda \right)   \leq \frac{1}{2} 
\end{align*}
Hence
\begin{align*}
\frac{1}{2} \leq \P \left(\sup_{s\leq t_0} |X_s| < \lambda\right) \leq \P \left(\sup_{s\leq t_0} X_s < \lambda\right). 
\end{align*}
and $\P \left(\inf_{s\leq t_0} X_s > - \lambda\right)\geq 1/2$.
Combining these with \eqref{prop:CDF_sup_inf_1}   gives ${\kappa} (\frac{1}{t_0}, 0){V}(\lambda) \geq \frac{1}{4}$ and $\widehat{\kappa} (\frac{1}{t_0}, 0)\widehat{V}(\lambda) \geq \frac{1}{4}$. These together with Corollary \ref{cor:HapproxV} imply
$$ {\kappa}\left (\frac{1}{t_0}, 0\right){V}(\lambda)\leq 4 \widehat{\kappa} \left(\frac{1}{t_0}, 0\right)\widehat{V}(\lambda) {\kappa}\left (\frac{1}{t_0}, 0\right){V}(\lambda)=4\frac{{V}(\lambda)\widehat{V}(\lambda)}{t_0}\leq c.$$
Hence $${\kappa} \left(\frac{1}{t_0}, 0\right)\approx \frac{1}{V(\lambda)} \quad \text{and}\quad \widehat{\kappa} \left(\frac{1}{t_0}, 0\right)\approx\frac{1}{\widehat{V}(\lambda)}.$$
Since by the proof of Corollary  \ref{cor:kappa_sc} $h(\lambda)t_0\approx 1$ we obtain by Corollary \ref{cor:kappa_sc}
$${\kappa} (h(\lambda), 0)\approx \frac{1}{V(\lambda)} \quad \text{and}\quad \widehat{\kappa} (h(\lambda), 0)\approx\frac{1}{\widehat{V}(\lambda)}.$$
\end{proof}
By the above proof we have the following result.
\begin{corollary}\label{cor:kappa_est}
Assume that $\Re\psi\in \mathrm{WLSC}(\alpha)$ with $\alpha>1$ and $\mathbb{E}X_1=0$,
then
$$\kappa(\lambda,0)\approx \frac{1}{V(h^{-1}(\lambda))}, \quad \lambda>0,$$
where the comparability constant depends only on the scaling characteristics.
\end{corollary}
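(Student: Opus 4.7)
The plan is to recognize that this corollary is a reformulation of an intermediate estimate proved within Theorem \ref{prop:CDF_sup_inf}. Specifically, the closing displays of that proof established
\[
\kappa(h(\lambda),0) \approx \frac{1}{V(\lambda)}, \quad \lambda>0,
\]
with comparability constants depending only on $\alpha$ and $\theta$. Setting $\mu = h(\lambda)$, i.e.\ $\lambda = h^{-1}(\mu)$, immediately rewrites this as $\kappa(\mu,0) \approx 1/V(h^{-1}(\mu))$ for $\mu > 0$, which is exactly the claim. So the strategy is simply: invoke the already-proved relation and substitute.

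The only point that needs checking is that the change of variables $\lambda \mapsto h^{-1}(\mu)$ is legitimate. Under $\Re\psi \in \mathrm{WLSC}(\alpha,\theta)$ with $\alpha>1$, the comparison $h(r)\approx\sup_{|x|\leq 1/r}\Re\psi(x)$ recalled in the introduction, combined with the weak lower scaling of $\Re\psi$, makes $h$ comparable to a continuous strictly decreasing function whose range is $(0,\infty)$. Hence $h^{-1}$ is well-defined, either as a proper inverse or as a generalized inverse, at the cost of at most a universal distortion of constants that can be absorbed into the comparability.

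Tracking dependencies confirms the stated sharpness of the constants: they originate from Corollary \ref{cor:kappa_sc} (weak scaling of $\kappa$, which requires $\mathrm{WLSC}$ for $\Re\psi$ and $\mathbb{E}X_1=0$) and from Corollary \ref{cor:HapproxV} (the two-sided comparison $V\widehat V\approx 1/h$ in the same regime), both of which involve only $\alpha$ and $\theta$. In this sense there is no genuinely new technical obstacle; the main task is simply bookkeeping, and the corollary drops out of the proof of Theorem \ref{prop:CDF_sup_inf} via a single substitution.
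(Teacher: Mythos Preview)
Your proposal is correct and matches the paper's own approach exactly: the paper introduces the corollary with the single line ``By the above proof we have the following result,'' referring to the final displays of Theorem~\ref{prop:CDF_sup_inf}, and your substitution $\mu=h(\lambda)$ is precisely the implicit step being invoked. The additional remarks on the well-definedness of $h^{-1}$ and the tracking of constants are reasonable elaborations but not part of the paper's (essentially nonexistent) proof.
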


\begin{lemma}\label{lem:Vscal}
Assume that $\Re\psi\in \mathrm{WLSC}(\alpha,\theta)$ with $\alpha>1$ and $\mathbb{E}X_1=0$, then there exists a constant $\Cf=\Cf(\alpha,\theta)$ such that
$$V(\lambda x)\geq \Cf \lambda^{\alpha-1}V(x),\quad x>0,\,\lambda\geq 1.$$
\end{lemma}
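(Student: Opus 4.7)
The plan is to combine the product estimate $V(r)\widehat V(r)\approx 1/h(r)$ from Corollary \ref{cor:HapproxV} with subadditivity of $\widehat V$ and a scaling property of $h$ inherited from the weak lower scaling of $\Re\psi$. The key identity driving the argument is
$$V(\lambda x)\approx\frac{1}{h(\lambda x)\,\widehat V(\lambda x)},$$
so the goal is to lower-bound the right-hand side by $\lambda^{\alpha-1}/(h(x)\widehat V(x))\approx\lambda^{\alpha-1}V(x)$.

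The first step is to transfer the WLSC hypothesis on $\Re\psi$ to a decay estimate on $h$. Using the comparison $h(r)\approx\sup_{|\xi|\le 1/r}\Re\psi(\xi)$ (which holds up to an absolute constant by \cite[Lemma 4]{MR3225805}) together with the symmetry $\Re\psi(\xi)=\Re\psi(-\xi)$, the supremum on the right-hand side inherits $\mathrm{WLSC}(\alpha,\theta)$. Substituting $u=1/(\lambda x)$ into the scaling $\psi^\ast(\mu u)\ge\theta\mu^\alpha\psi^\ast(u)$ with $\mu=\lambda$, this translates into
$$h(\lambda x)\le c\,\lambda^{-\alpha}\,h(x),\qquad x>0,\ \lambda\ge 1,$$
with $c=c(\alpha,\theta)$.

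The second step is the elementary observation that, since $\widehat V$ is subadditive and non-decreasing (recalled in Section \ref{sec:fluctuation}),
$$\widehat V(\lambda x)\le 2\lambda\,\widehat V(x),\qquad x>0,\ \lambda\ge 1.$$
Combining these two bounds with Corollary \ref{cor:HapproxV} applied at $\lambda x$ and at $x$ yields
$$V(\lambda x)\approx\frac{1}{h(\lambda x)\,\widehat V(\lambda x)}\ge\frac{c_1\lambda^{\alpha}}{h(x)\cdot 2\lambda\,\widehat V(x)}=\frac{c_1\lambda^{\alpha-1}}{2\,h(x)\widehat V(x)}\approx\lambda^{\alpha-1}V(x),$$
with comparability constants depending only on $\alpha$ and $\theta$, which is precisely the claimed bound.

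There is no real obstacle here once Corollary \ref{cor:HapproxV} is available; the only subtlety is the transfer of WLSC from $\Re\psi$ to $h$, for which one must be careful that the direction of the inequality reverses (lower scaling of $\Re\psi$ at infinity becomes upper scaling of $h$ at infinity, since $h(r)$ captures $\Re\psi$ on the scale $|\xi|\le 1/r$). An alternative route via the scaling of $\kappa$ in Corollary \ref{cor:kappa_sc} and the identification $V(r)\approx 1/\kappa(h(r),0)$ from Corollary \ref{cor:kappa_est} yields only the weaker exponent $\alpha(1-\rho)$, which can be smaller than $\alpha-1$ when $\rho>1/\alpha$; so the route through Corollary \ref{cor:HapproxV} and subadditivity of $\widehat V$ is the one that produces the sharp exponent $\alpha-1$.
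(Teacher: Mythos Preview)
Your proof is correct and follows essentially the same route as the paper: both arguments combine the product estimate $V(r)\widehat V(r)\approx 1/h(r)$ from Corollary \ref{cor:HapproxV}, the subadditivity bound $\widehat V(\lambda x)\le 2\lambda\widehat V(x)$, and the decay $h(\lambda x)\le c\lambda^{-\alpha}h(x)$ coming from $\Re\psi\in\mathrm{WLSC}(\alpha,\theta)$. The paper simply arranges the same inequalities as a chain starting from $\widehat V(x)V(\lambda x)$ rather than isolating $V(\lambda x)\approx 1/(h(\lambda x)\widehat V(\lambda x))$ first, but the content is identical.
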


\begin{proof}
By subadditivity of $V$,  Corollary \ref{cor:HapproxV} and comparibility of $h(r)$ with $\Re \psi(1/r)$ imply 
\begin{align*}
\widehat V(x) V(\lambda x)  \geq & \frac{1}{2\lambda} \widehat V(\lambda x) V(\lambda x) 	 \geq \frac{c}{\lambda}  \frac{1}{h(\lambda x)} \geq  \frac{c\lambda^\alpha}{\lambda} \frac{1}{h(x)} \\
				 \geq & c\lambda^{\alpha-1} \widehat V(x)V(x).
\end{align*}
That is
\begin{align*}
V(\lambda x) \geq c \lambda^{\alpha-1} V(x).
\end{align*}
\end{proof}

\begin{theorem}\label{thm:main}
  Assume that $\Re\psi\in \mathrm{WLSC}(\alpha,\theta)$ with $\alpha>1$ and $\mathbb{E}X_1=0$ then there exists a constant $C=C(\alpha,\theta)>0$ such that for any $R>0$ and $0<x<R$
\begin{align*}
C\, \widehat{V}(x)V(R-x) \leq \E^x \tau_{(0,R)} \leq 2 \widehat{V}(x)V(R-x).
\end{align*}
\end{theorem}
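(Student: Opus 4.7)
My plan is to combine the Bertoin identity underlying Proposition~\ref{prop:exit_upper} with a strong Markov decomposition and the classical harmonicity of $\widehat V$.

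\textbf{Upper bound.} The argument for Proposition~\ref{prop:exit_upper} in fact yields the sharper estimate $\mathbb{E}^x\tau_{(0,R)} \le \int_0^x \widehat V(dz)\,V(R-x+z)$. Applying subadditivity $V(R-x+z)\le V(R-x)+V(z)$ and then $V(z)\le V(x)$ for $z\le x$ gives
$$\mathbb{E}^x\tau_{(0,R)} \le \widehat V(x)V(R-x)+\widehat V(x)V(x).$$
If $x\le R/2$, monotonicity of $V$ gives $V(x)\le V(R-x)$ and the factor $2$ is immediate. If $x>R/2$, the identity $\mathbb{E}^x\tau_{(0,R)}^X = \mathbb{E}^{R-x}\tau_{(0,R)}^{\widehat X}$ reduces the estimate to the previous case applied to the dual process, where now $\widehat V(R-x)\le\widehat V(x)$ plays the analogous role.

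\textbf{Lower bound.} The same Bertoin identity furnishes the matching lower bound for the auxiliary potential
$$I_+(x):=\mathbb{E}^x\!\!\int_0^{\tau_{(0,\infty)}}\!\!\mathbf{1}_{(0,R)}(X_s)\,ds = \int_0^x \widehat V(dz)\,V(R-x+z) \ge V(R-x)\widehat V(x),$$
and since $\tau_{(0,R)}\le \tau_{(0,\infty)}$, strong Markov at $\tau:=\tau_{(0,R)}$ decomposes
$$I_+(x) = \mathbb{E}^x \tau_{(0,R)} + \mathbb{E}^x[\mathbf{1}_{X_\tau\ge R}\,I_+(X_\tau, R)].$$
It therefore suffices to show that the overshoot term is at most $(1-c)I_+(x)$ for some $c=c(\alpha,\theta)>0$. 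For $y \ge R$, Bertoin again gives $I_+(y, R) = \int_0^R V(du)\bigl[\widehat V(y) - \widehat V(y-R+u)\bigr]$, which by subadditivity of $\widehat V$ and Corollary~\ref{cor:HapproxV} is bounded by $V(R)\widehat V(R)\approx 1/h(R)$. For the probability I would use the classical invariance of $\widehat V$ (extended by $0$ on $(-\infty,0]$) for $X$ killed on entering $(-\infty,0]$: optional sampling at $\tau$ yields the weighted identity $\widehat V(x) = \mathbb{E}^x[\widehat V(X_\tau); X_\tau\ge R]$, and in particular $\mathbb{P}^x(X_\tau\ge R) \le \widehat V(x)/\widehat V(R)$.

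\textbf{The hardest step} is pushing these two estimates through $\mathbb{E}^x[\mathbf{1}_{X_\tau\ge R}\, I_+(X_\tau,R)]$ with a constant strictly less than $1$: the crude combination only yields $\lesssim \widehat V(x) V(R)$, of the same order as $I_+(x)$. The extra room must come from the weighted identity $\mathbb{E}^x[\widehat V(X_\tau);X_\tau\ge R]=\widehat V(x)$ together with the decay of the ratio $I_+(y, R)/\widehat V(y)$ as $y-R$ grows; coupling these via an Ikeda--Watanabe integration against the L\'{e}vy measure and invoking the weak lower scaling of $V$ and $\widehat V$ from Lemma~\ref{lem:Vscal} (where $\alpha>1$ is essential) should produce the required constant $C(\alpha,\theta)$. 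The case $x>R/2$ reduces to $x\le R/2$ by duality, as in the upper bound.
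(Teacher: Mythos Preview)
Your upper bound is correct and matches the paper's argument (Proposition~\ref{prop:exit_upper} plus subadditivity, with duality for $x>R/2$).

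The lower bound, however, has a genuine gap at precisely the point you flag as ``the hardest step.'' Your decomposition $I_+(x)=\E^x\tau_{(0,R)}+\E^x[\mathbf{1}_{\{X_\tau\ge R\}}\,I_+(X_\tau)]$ is correct, and so are the crude bounds $I_+(y)\le V(R)\widehat V(R)$ and $\P^x(X_\tau\ge R)\le\widehat V(x)/\widehat V(R)$. But, as you yourself note, these only give an overshoot term of order $\widehat V(x)V(R)\approx I_+(x)$, with no strict gap. Your proposed fix via Ikeda--Watanabe is essentially circular: the Ikeda--Watanabe formula expresses the exit distribution through the Green function $G_{(0,R)}(x,\cdot)$, whose integral is exactly $\E^x\tau_{(0,R)}$ --- the quantity you are trying to bound from below. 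And the identity $\E^x[\widehat V(X_\tau);X_\tau\ge R]=\widehat V(x)$ combined with decay of $I_+(y)/\widehat V(y)$ cannot by itself separate the overshoot term from $I_+(x)$: when $X_\tau$ lands close to $R$ (or equals $R$, under creeping), the ratio $I_+(y)/\widehat V(y)$ is of order $V(R)$, and nothing in your setup forces the overshoot to be typically large.

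The paper avoids this difficulty by a different route. Instead of comparing $\E^x\tau_{(0,R)}$ with $I_+(x)$, it uses the pointwise tail estimate of Theorem~\ref{prop:CDF_sup_inf}: starting from the elementary inequality
\[
\P^x(\tau_{(0,\infty)}\ge t)\ \le\ \frac{\E^x\tau_{(0,R)}}{t}\ +\ \P^x(\tau_{(0,R)}<\tau_{(0,\infty)})\ \le\ \frac{\E^x\tau_{(0,R)}}{t}+\frac{\widehat V(x)}{\widehat V(R)},
\]
one plugs in the lower bound $\P^x(\tau_{(0,\infty)}\ge t)\gtrsim \widehat V(x)/\widehat V(h^{-1}(1/t))$ and then \emph{chooses} $t=(c_1 h(R))^{-1}$ so that, via the scaling of $\widehat V$ (Lemma~\ref{lem:Vscal}), $\widehat V(h^{-1}(1/t))\le \widehat V(R)/(2\Ca)$. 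This tuning makes the survival term strictly dominate the exit-above term, and rearranging gives $\E^x\tau_{(0,R)}\gtrsim t\,\widehat V(x)/\widehat V(R)\approx \widehat V(x)V(R)$ for small $x$; the intermediate range is then covered by Pruitt's lower bound for centred intervals. The key input your approach lacks is Theorem~\ref{prop:CDF_sup_inf}: it supplies a uniform lower bound on the tail of $\tau_{(0,\infty)}$ that cannot be recovered from the potential identity for $I_+$ alone.
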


\begin{proof}
Fix $R>0$. First assume that $0<x\leq R/2$. The upper bound is a consequence of Proposition \ref{prop:exit_upper} and subadditivity of $V$. 
Let us notice  that the assumption $\Re\psi\in \mathrm{WLSC}(\alpha)$ with $\alpha>1$ implies that the process $X_t$ has paths of unbounded variation (see \cite[Lemma 2.9]{2019_TG_KS}). Hence, by \cite{MR0242261}, $0$ is regular for half-lines $(0,\infty)$ and $(-\infty,0)$. Therefore, by \cite[Theorem 2]{MR573292}, $\widehat{V}$ is harmonic and continuous function on $(0,\infty)$. That is $ \E^x \widehat{V}(X_{\tau_{(r,R)}}) = \widehat{V}(x) $, for $0<r<R$ and $x\in \mathbb{R}$. Since $\widehat{V}$ is locally bounded and continuous by quasi left continuity of $X_t$ and Fatou Lemma we have  $\widehat{V}(x)\geq \E^x \widehat{V}(X_{\tau_{(0,R)}})$. By monotonicity of $\widehat{V}$ we obtain
\begin{align*}
\widehat{V}(x) \geq \E^x \widehat{V}(X_{\tau_{(0,R)}})\geq \E^x \left[ \widehat{V}(X_{\tau_{(0,R)}}),  X_{\tau_{(0,R)}} \geq R  \right ] \geq  \widehat{V}(R) \P^x (X_{\tau_{(0,R)}} \geq R).
\end{align*}
 Hence,
\begin{align*}
\P^x (\tau_{(0,R)} < \tau_{(0,\infty)}) \leq \frac{\widehat{V}(x)}{\widehat{V}(R)}.
\end{align*}
Observe that by the Markov inequality, for any $t>0$,
\begin{align*}
\P^x (\tau_{(0,\infty)} \geq t) \leq \P^x (\tau_{(0,R)} \geq t)  + \P^x (\tau_{(0,R)} < \tau_{(0,\infty)}) \leq  \frac{\E^x \tau_{(0,R)}}{t} + \P^x (\tau_{(0,R)} < \tau_{(0,\infty)})
\end{align*}
Hence,  by Theorem \ref{prop:CDF_sup_inf}, for $t>0$,
\begin{equation}\label{eq:ex_1}
\E^x \tau_{(0,R)}\geq t\left (\Ca^{-1}\min\left\{1,\frac{\widehat{V}(x)}{\widehat{V}(h^{-1}(1/t))}\right\}- \frac{\widehat{V}(x)}{\widehat{V}(R)}\right).
\end{equation}
Now we choose appropriate time $t>0$  such that we get the lower bound for small $x$. 
Let us observe that by the scaling property for $h^{-1}$ and $\widehat{V}$ (see Lemma \ref{lem:Vscal}) there exists a constant $c_1\geq 4$,  depending only on $\alpha$ and $\theta$, such that for $t=(c_1h(R))^{-1}$ we have 
$$\widehat{V}(h^{-1}(1/t))\leq \frac{\widehat{V}(R)}{2\Ca}.$$
Hence by \eqref{eq:ex_1} and \eqref{eq:h_sc_general} , for $0<x\leq R/\sqrt{c_1}$ we obtain
\begin{align*}
\E^x \tau_{(0,R)} \geq &  t\left(\Ca^{-1}\frac{\widehat{V}(x)}{\widehat{V}(h^{-1}(1/t))}-\frac{\widehat{V}(x)}{\widehat{V}(R)}\right) \geq t\frac{\widehat{V}(x)}{\widehat{V}(R)}.
\end{align*}
By Corollary \ref{cor:HapproxV} we have $t\approx V(R)\widehat{V}(R)$ therefore  we infer
$$\E^x \tau_{(0,R)} \geq c \widehat{V}(x)V(R),$$
for $0<x\leq R/\sqrt{c_1}$.

Let us denote $x_0=R/\sqrt{c_1}$. Now we consider  $x_0<x\leq R/2$. Since $|b_r|/r\leq c h(r)$ we have by Pruitt's lower bound of $\E\tau_{(-r,r)}$ (\cite{MR632968})  
\begin{align*}
\E^x \tau_{(0,R)} \geq &  \E^x \tau_{(0,2x)}\geq \E^{x_0} \tau_{(0,2x_0)} \approx \frac{1}{h(x_0)} \approx \frac{1}{h(R)}\approx \widehat V(R)V(R)\geq \widehat V(x)V(R). 
\end{align*}
For $x>R/2$ one can use the above reasoning for the dual process.
\end{proof}
\section{Estimates of the renewal function}
In this section we present sharp and explicit estimates for the renewal functions in several cases. 

\begin{example}
Let $X_t$ be strictly $\alpha$-stable process ($1<\alpha<2$) then it is well known that the ladder height processes are stable subordinators and for $\rho=\P(X_1>0)$,
$$V(r)=c_1r^\alpha\rho\quad \text{and} \quad\widehat V(r)=c_2r^{\alpha(1-\rho)},\quad r>0.$$
Hence, by Theorem \ref{thm:main} we have $$\E^x\tau_{0,R}\approx x^{\alpha(1-\rho)} (R-x)^{\alpha\rho}\quad 0<x<R.$$ 
In fact  in this case the equality \eqref{eq:stable} holds instead of the comparability (see i.e. \cite[Remark 5]{MR3618135}).
\end{example}

\begin{example}
Let $X_t$ be symmetric. Then  $\hat{V}(r)=V(r)$ and by Corollary \ref{cor:HapproxV} $$V(r)\approx \frac{1}{\sqrt{h(r)}},\quad r>0.$$
In fact the above comparability holds for every non-Poisson L\'{e}vy process (see \cite[Proposition 2.4]{MR3350043}).
Theorem \ref{thm:main} implies $$\E^x\tau_{(0,R)}\approx \frac{1}{\sqrt{h(x)h(R-x)}},\quad 0<x<R.$$ 
\end{example}

Now we discuss  when the renewal function $V$ behave like the linear one.
\begin{lemma}\label{lem:ImRe}
Assume that $\Re\psi\in \mathrm{WLSC}(\alpha,\theta)$ with $\alpha>1$, then there exists $C=C(\psi)$ such that
$$|\mathrm{Im}\psi(\xi)|\leq C\Re\psi(\xi),\quad |\xi|\geq 1.$$
If additionally $\mathbb{E} X_1=0$, then  the above inequality holds for $\xi\in\mathbb{R}$ and $C=C(\alpha,\theta)$.
\end{lemma}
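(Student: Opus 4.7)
The plan is to reduce the claim to showing $|\mathrm{Im}\,\psi(\xi)|\leq C\,h(1/|\xi|)$, because under $\mathrm{WLSC}(\alpha,\theta)$ one has the comparability $\Re\psi(\xi)\approx h(1/|\xi|)$. Indeed, the right-hand inequality of the standard bound $h(r)/24\leq\sup_{|y|\leq 1/r}\Re\psi(y)\leq 2h(r)$ gives $\Re\psi(\xi)\leq 2h(1/|\xi|)$; conversely, WLSC turns the supremum into a pointwise lower bound, since for $|y|\leq|\xi|$ it forces $\Re\psi(y)\leq\theta^{-1}\Re\psi(\xi)$, yielding $h(1/|\xi|)\leq 24\theta^{-1}\Re\psi(\xi)$.

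The technical linchpin is a tail moment estimate
\[
\int_{|x|\geq r}|x|\,\nu(dx)\leq C\,r\,h(r),\qquad r>0,
\]
valid whenever this integral is finite (e.g.\ when $\mathbb{E}X_1=0$), together with the unconditional restricted version $\int_{r\leq|x|<1}|x|\,\nu(dx)\leq C\,r\,h(r)$ for $r\leq 1$. Both follow by integration by parts using $\nu(|x|\geq u)\leq h(u)$ and the upper scaling $h(u)\leq C(r/u)^\alpha h(r)$ for $u\geq r$ (the WLSC on $\Re\psi$ transferred to $h$ via the comparability). Crucially, $\alpha>1$ ensures $\int_r^\infty h(u)\,du\leq\frac{C}{\alpha-1}\,r\,h(r)$, and when needed the boundary term $u\nu(|x|\geq u)\to 0$ follows from $\mathbb{E}|X_1|<\infty$ by Markov.

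For the mean-zero case, the centering can be pushed to infinity so that $\mathrm{Im}\,\psi(\xi)=-\int(\sin\xi x-\xi x)\,\nu(dx)$ as an absolutely convergent integral. Splitting at $|x|=1/|\xi|$ and using $|\sin t-t|\leq|t|^3/6$ for $|t|\leq 1$ and $|\sin t-t|\leq 2|t|$ for $|t|\geq 1$, the inner piece is bounded by $\tfrac{\xi^2}{6}\int_{|x|<1/|\xi|}x^2\,\nu(dx)\leq\tfrac{1}{6}\,h(1/|\xi|)$ (by the definition of $h$), and the outer piece by $2|\xi|\int_{|x|\geq 1/|\xi|}|x|\,\nu(dx)\leq C\,h(1/|\xi|)$ via the auxiliary estimate. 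This yields the second assertion with $C=C(\alpha,\theta)$ for every $\xi\neq 0$; the case $\xi=0$ is trivial.

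For the general case with $|\xi|\geq 1$, I would re-center the L\'evy--Khintchine formula at scale $1/|\xi|$, giving $\mathrm{Im}\,\psi(\xi)=-\xi b_{1/|\xi|}-\int(\sin\xi x-\xi x\mathds{1}_{|x|<1/|\xi|})\,\nu(dx)$; the integral is again at most $\tfrac{7}{6}\,h(1/|\xi|)$ by the same splitting. Since $b_{1/|\xi|}=\gamma-\int_{1/|\xi|\leq|x|<1}x\,\nu(dx)$ for $|\xi|\geq 1$, the restricted auxiliary bound produces $|\xi|\cdot\bigl|\int_{1/|\xi|\leq|x|<1}x\,\nu(dx)\bigr|\leq C\,h(1/|\xi|)$. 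The deterministic drift is absorbed via WLSC, which gives $\Re\psi(\xi)\geq\theta\Re\psi(1)|\xi|^\alpha$, so $|\gamma||\xi|\leq C(\psi)|\xi|^{1-\alpha}\Re\psi(\xi)\leq C(\psi)\Re\psi(\xi)$, where $\alpha>1$ is essential. The main obstacle I anticipate is bookkeeping constants so that in the mean-zero conclusion nothing depends on $\psi$ beyond $(\alpha,\theta)$, which requires that both $\Re\psi\approx h(1/\cdot)$ and the auxiliary moment bound be furnished with constants depending only on the scaling parameters.
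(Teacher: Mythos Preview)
Your proof is correct and follows essentially the same approach as the paper: both reduce to $|\mathrm{Im}\,\psi(\xi)|\leq C\,h(1/|\xi|)$ via the comparability $\Re\psi(\xi)\approx h(1/|\xi|)$, split the integrand at scale $1/|\xi|$, control the inner piece by the Taylor bound $|\sin t-t|\leq|t|^3/6$ and the outer piece by the tail--moment estimate $\int_{|x|\geq r}|x|\,\nu(dx)\leq C\,r\,h(r)$, and absorb the drift term $|\gamma||\xi|$ using the lower scaling $\Re\psi(\xi)\gtrsim|\xi|^\alpha$ with $\alpha>1$. The only cosmetic differences are that the paper keeps the standard cutoff at $1$ in the L\'evy--Khintchine formula (rather than re-centering at $1/|\xi|$ via $b_{1/|\xi|}$) and cites the tail--moment bound as \cite[Lemma~2.10]{2019_TG_KS} instead of rederiving it.
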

\begin{proof}Since $\Re\psi\in \mathrm{WLSC}(\alpha,\theta)$ with $\alpha>1$, we infer $\int_{|x|\geq1}|x|\nu(dx)<\infty$ (see \cite[Lemma 2.10]{2019_TG_KS}).
Let us notice that $$\mathrm{Im}\psi(\xi)=-(\gamma\xi+ \int_{|x|\geq 1}\sin(x\xi)\nu(dx))+\int^1_{-1}(x\xi-\sin(x\xi))\nu(dx).$$ 
Hence, for $|\xi|\geq 1$,
\begin{align*}|\mathrm{Im}\psi(\xi)|&\leq |\gamma||\xi|+\frac{1}{6}\int^{1/|\xi|}_{-1/|\xi|}|x\xi|^3\nu(dx)+\int_{|x|>1/|\xi|} |x\xi|\nu(dx)\\&\leq |\gamma||\xi|+\frac{1}{6}\int^{1/|\xi|}_{-1/|\xi|}|x\xi|^2\nu(dx)+2|\xi|\int_{1/|\xi|<|x|}|x|\nu(dx).
\end{align*}
Since $|\gamma||\xi|\leq \theta^{-1}|\gamma|h(1/|\xi|)/h(1)$ and $K(1/|\xi|)\leq 2\Re\psi(\xi)$ by \cite[Lemmas 2.10 and 2.3]{2019_TG_KS} we obtain the first claim. If $\mathbb{E}X_1=0$ we have $\gamma+\int_{|x|\geq 1}x\nu(dx)=0$. That is
$$\mathrm{Im}\psi(\xi)=\int_{\mathbb{R}}(x\xi-\sin(x\xi))\nu(dx).$$
Hence, for $\xi\neq 0$,
$$|\mathrm{Im}\psi(\xi)|\leq \frac{1}{6}\int^{1/|\xi|}_{-1/|\xi|}|x\xi|^3\nu(dx)+2|\xi|\int_{1/|\xi|<|x|}|x|\nu(dx)\leq C\,h(1/|\xi|),$$
where in the last inequality we use again  \cite[Lemma 2.10]{2019_TG_KS}.
\end{proof}

\begin{lemma}\label{lem:ex3} Assume that $\Re\psi\in \mathrm{WLSC}(\alpha,\theta)$ with $\alpha>1$ and $\E X_1=0$. Then
$$\lim_{\lambda\to0^+}\int_\R(1-\cos(xy))\Re\frac{1}{\psi(y)+\lambda}dy\approx \frac{1}{|x|h(|x|)},\quad x\neq 0.$$
\end{lemma}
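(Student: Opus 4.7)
The plan is to pass to the limit by dominated convergence, use Lemma \ref{lem:ImRe} to swap $\Re(\psi+\lambda)^{-1}$ for $(\Re\psi)^{-1}$ up to multiplicative constants, and then estimate by rescaling together with the scaling of $h$.

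Since $\mathbb{E}X_1=0$, Lemma \ref{lem:ImRe} gives $|\mathrm{Im}\,\psi(y)|\leq C\Re\psi(y)$ for every $y\in\mathbb{R}$. Combined with the identity $\Re\frac{1}{\psi(y)+\lambda}=\frac{\Re\psi(y)+\lambda}{(\Re\psi(y)+\lambda)^2+(\mathrm{Im}\,\psi(y))^2}$, this yields
\[
\frac{1}{(1+C^2)(\Re\psi(y)+\lambda)}\leq \Re\frac{1}{\psi(y)+\lambda}\leq \frac{1}{\Re\psi(y)+\lambda},\qquad \lambda\geq 0,\ y\neq 0.
\]
I would then check that $(1-\cos(xy))/\Re\psi(y)$, which dominates the integrand uniformly in $\lambda$, is integrable: WLSC gives $\Re\psi(y)\geq c|y|^\alpha$ on $\{|y|\geq 1\}$, so the integrand decays like $|y|^{-\alpha}$ at infinity (integrable since $\alpha>1$); and since $X_t$ is not a compound Poisson process, $\sigma^2+\int_{|z|\leq 1}z^2\,\nu(dz)>0$, which combined with $1-\cos t\geq (11/24)t^2$ for $|t|\leq 1$ forces $\Re\psi(y)\geq c y^2$ on $\{|y|\leq 1\}$, so $(1-\cos(xy))/\Re\psi(y)\leq C x^2$ near the origin. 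Dominated convergence and the two-sided bound above then give
\[
\lim_{\lambda\to 0^+}\int_\R(1-\cos(xy))\Re\frac{1}{\psi(y)+\lambda}\,dy \approx \int_\R\frac{1-\cos(xy)}{\Re\psi(y)}\,dy.
\]

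After the change of variables $u=|x|y$ (using that $\Re\psi$ and $1-\cos$ are even) and the identity $\Re\psi(\xi)\approx h(1/|\xi|)$ (which follows from \cite[Lemma 4]{MR3225805} together with WLSC, exactly as used in the proof of Theorem \ref{prop:CDF_sup_inf}), the claim reduces to
\[
\int_\R\frac{1-\cos u}{h(|x|/|u|)}\,du \approx \frac{1}{h(|x|)}.
\]
Two scalings of $h$ make this elementary. From \eqref{eq:h_sc_general} applied with $\lambda=|u|$ on $\{|u|\leq 1\}$ and with $\lambda=1/|u|$ on $\{|u|\geq 1\}$ one gets $h(|x|/|u|)\geq |u|^2 h(|x|)$ on $\{|u|\leq 1\}$ and $h(|x|/|u|)\leq |u|^2 h(|x|)$ on $\{|u|\geq 1\}$. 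The weak upper scaling of $h$ inherited from $\Re\psi\in\mathrm{WLSC}(\alpha,\theta)$ complements this with $h(|x|/|u|)\leq C|u|^\alpha h(|x|)$ on $\{|u|\leq 1\}$ and $h(|x|/|u|)\geq c|u|^\alpha h(|x|)$ on $\{|u|\geq 1\}$. Splitting the integral at $|u|=1$ and using $1-\cos u\approx u^2$ on $\{|u|\leq 1\}$ and $0\leq 1-\cos u\leq 2$ on $\{|u|\geq 1\}$, both halves are comparable to $1/h(|x|)$: the inner half because $\int_0^1 u^{2-\alpha}\,du$ is finite (as $\alpha<3$) and the outer half because $\int_1^\infty u^{-\alpha}\,du$ is finite (as $\alpha>1$). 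The $1/|x|$ prefactor coming from the change of variables then produces the asserted $\approx 1/(|x|h(|x|))$.

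The main obstacle will be the domination step: one needs the pointwise lower bound $\Re\psi(y)\gtrsim y^2$ near $y=0$, which has to be extracted quantitatively from the non-compound-Poisson hypothesis via the bound $1-\cos t\geq (11/24)t^2$ for $|t|\leq 1$. The remaining scaling computation is a mechanical (if careful) application of the two-sided scaling of $h$ already established earlier in the paper.
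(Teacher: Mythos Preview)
Your proof is correct and follows the same route as the paper: reduce $\Re(\psi+\lambda)^{-1}$ to $(\Re\psi+\lambda)^{-1}$ via Lemma~\ref{lem:ImRe}, pass to the limit, and then estimate $\int_\R\frac{1-\cos(xy)}{\Re\psi(y)}\,dy$. The only difference is that the paper outsources both the domination step and the final integral estimate to \cite[Lemma~2.14]{MR3636597}, whereas you carry out the computation by hand through the substitution $u=|x|y$ and the two-sided scaling of $h$; your version is thus more self-contained. One minor remark: the reason $\sigma^2+\int_{|z|\le1}z^2\,\nu(dz)>0$ is not really ``not compound Poisson'' but the WLSC hypothesis itself (otherwise $\Re\psi$ would be bounded), so you may want to phrase that justification accordingly.
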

\begin{proof}
By Lemma \ref{lem:ImRe}
$$\Re\frac{1}{\psi(y)+\lambda}=\frac{\Re\psi(y)+\lambda}{(\Re\psi(y)+\lambda)^2+(\mathrm{Im}\psi(y))^2}\approx \frac{1}{\Re\psi(y)+\lambda}.$$
This together with the proof of Lemma 2.14 in \cite{MR3636597} allow us to use the dominated convergence theorem to justify existence of the limit in the claim.  In addition, by the monotone convergence theorem
$$\lim_{\lambda\to0^+}\int_\R(1-\cos(xy))\Re\frac{1}{\psi(y)+\lambda}dy\approx \int_\R(1-\cos(xy))\frac{1}{\Re\psi(y)}dy.$$
The claim is a consequence of  \cite[Lemma 2.14]{MR3636597}.
\end{proof}
In our setting we are provide the integral condition equivalent to the existence of the non-zero drift term of the ladder height subordinator. This type condition was conjectured by Doney and Maller in \cite[Remark 6]{MR1894105}. In fact the denominator in our condition is greater than in Doney and Maller's proposition. We notice that one can consider a spectrally positive L\'{e}vy process with non-zero Gaussian term then our condition is finite but Doney and Maller's can not be even computed but it is well known that then the drift term is non-zero. Hence their conjecture is incorrect.  
\begin{proposition}\label{prop:creeping}Assume that $\Re\psi\in \mathrm{WLSC}(\alpha,\theta)$ with $\alpha>1$ and $\E X_1=0$.
$V(x)\approx x$ for $0<x<1$ if and only if
\begin{equation}\label{eq:123}\int^1_0\frac{\nu(y,\infty)}{h(y)}\frac{dy}{y}<\infty.\end{equation}
\end{proposition}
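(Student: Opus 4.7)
The plan is to pivot on $\delta$, the drift of the ascending ladder-height subordinator $H$. First I would show that $V(x)\approx x$ on $(0,1)$ is equivalent to $\delta>0$. Since $\mathcal{L}V(\lambda)=1/\kappa(0,\lambda)$ and $\kappa(0,\lambda)/\lambda\to\delta$ as $\lambda\to\infty$ (by dominated convergence applied to the jump part of $\kappa(0,\lambda)=\delta\lambda+\int_0^\infty(1-e^{-\lambda y})\eta(dy)$), Karamata's Tauberian theorem yields $V(x)/x\to 1/\delta$ as $x\to 0^+$. Combined with the subadditivity estimate $V(x)/x\geq V(1)/2$ on $(0,1)$ (a consequence of $V(\lambda x)\leq 2\lambda V(x)$ for $\lambda\geq 1$ together with the choice $\lambda x=1$), this gives $V(x)\approx x$ on $(0,1)$ if and only if $\delta>0$.

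It then remains to identify $\delta>0$ with the integral condition. By Corollary \ref{cor:HapproxV}, $V\hat V\approx 1/h$ on $(0,1)$, so the desired asymptotic is further equivalent to $\hat V(x)\approx 1/(xh(x))$ on $(0,1)$. The main tool is Vigon's identity $\eta(y,\infty)=\int_0^\infty \nu(y+z,\infty)\hat V(dz)$ combined with the integration-by-parts identity $\kappa(0,\lambda)/\lambda=\delta+\int_0^\infty e^{-\lambda y}\eta(y,\infty)\,dy$. Swapping orders of integration and substituting $u=y+z$, the jump contribution rewrites as $\int_0^\infty\nu(du)\,e^{-\lambda u}\int_0^u e^{\lambda z}\hat V(z)\,dz$; after splitting the inner integral at $z=u-1/\lambda$ and using the universal bound $\hat V(u)\leq C/(uh(u))$ on $(0,1)$---itself a consequence of Corollary \ref{cor:HapproxV} together with the subadditive lower bound $V(u)\geq (V(1)/2)u$---the jump part of $\kappa(0,\lambda)/\lambda$ is controlled by a constant multiple of $\int_0^{1/\lambda}\nu(u,\infty)/(u h(u))\,du$ plus lower-order boundary terms.

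The forward direction ($V\approx x\Rightarrow$ integral finite) then follows by direct Fubini computation together with the matching lower bound $\hat V(u)\approx 1/(uh(u))$, which is available under the hypothesis via Corollary \ref{cor:HapproxV}. The main obstacle is the reverse implication, since one must extract $\delta>0$ from only the integrability of the right-hand side---noting that $\int_0^\infty e^{-\lambda y}\eta(y,\infty)\,dy\to 0$ as $\lambda\to\infty$ in both regimes $\delta=0$ and $\delta>0$. I expect the way forward is to iterate Vigon's equation together with the \textrm{WLSC} scaling of $h$ (Corollary \ref{cor:kappa_sc} and Lemma \ref{lem:Vscal}) in order to upgrade the one-sided bound $\hat V(u)\leq C/(uh(u))$ into a two-sided asymptotic $\hat V(u)\approx 1/(uh(u))$ under the integral hypothesis; this then yields $V(u)\approx u$ on $(0,1)$ by Corollary \ref{cor:HapproxV}, and hence $\delta>0$ by the first paragraph.
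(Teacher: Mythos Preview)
Your pivot on the drift $\delta$ of the ladder-height subordinator is correct and matches the paper's first step: indeed $V(x)\approx x$ on $(0,1)$ is equivalent to $\delta>0$ via $V(x)\approx 1/\kappa(0,1/x)$. Your forward implication also works: once $V(x)\approx x$ one has $\widehat V(y)\approx 1/(yh(y))$ by Corollary~\ref{cor:HapproxV}, and the Vigon lower bound $\eta(y,\infty)\geq \widehat V(y)\,\nu(2y,\infty)$ together with $\int_0^1\eta(y,\infty)\,dy=\int_0^\infty(y\wedge 1)\,\eta(dy)<\infty$ (valid for any subordinator) yields \eqref{eq:123} after a change of variable.

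The reverse direction, however, has a genuine gap. Your stated plan is to upgrade the one-sided bound $\widehat V(u)\leq C/(uh(u))$ to a two-sided estimate $\widehat V(u)\approx 1/(uh(u))$ under the integral hypothesis, and then invoke Corollary~\ref{cor:HapproxV} to conclude $V(u)\approx u$. But $\widehat V(u)\approx 1/(uh(u))$ is, via $V\widehat V\approx 1/h$, \emph{equivalent} to $V(u)\approx u$; so the target of your iteration is the conclusion itself, and the sketch is circular. Nor does the decomposition $\kappa(0,\lambda)/\lambda=\delta+\int_0^\infty e^{-\lambda y}\eta(y,\infty)\,dy$ help distinguish the two cases, since (as you note) the jump term tends to $0$ regardless of whether $\delta=0$ or $\delta>0$. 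An actual argument would need an \emph{upper} bound on $\eta(y,\infty)$ in terms of $\nu(\cdot,\infty)$ and $\widehat V$, and Vigon's identity does not readily supply one of the right shape.

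The paper bypasses this entirely by invoking Millar's creeping criterion (Theorem~3.2 and Corollary~3.1 in \cite{MR321198}), which states that $\delta>0$ if and only if $\int_{0^+} g(y)\,\nu(y,\infty)\,dy<\infty$, where $g(y)$ is (essentially) the compensated potential kernel $\lim_{\lambda\to 0^+}\int_{\R}(1-\cos(xy))\Re\frac{1}{\psi(y)+\lambda}\,dy$. The only analytic input needed is then Lemma~\ref{lem:ex3}, which under the hypotheses $\Re\psi\in\mathrm{WLSC}(\alpha,\theta)$, $\alpha>1$, $\E X_1=0$ gives $g(x)\approx 1/(|x|h(|x|))$; together with the observation $\int_0^1 dx/(xh(x))<\infty$ (from the scaling of $h$), Millar's condition becomes exactly \eqref{eq:123}. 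This handles both directions at once and is what you are missing for the reverse implication.
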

\begin{proof}
 Let us notice that $\int^1_0 1/(xh(x))dx<\infty$. Hence, combining Lemma \ref{lem:ex3} with Theorem 3.2 and Corollary 3.1 in \cite{MR321198} we obtain that  \eqref{eq:123} is equivalent to having a positive drift by the ladder height process. Since $V(x)\approx \frac{1}{\kappa(0,1/x)}$, having a positive drift by the ladder height process  is equivalent to $V(x)\approx x$, $0<x<1$. 
\end{proof}
Notice that $V(x)\approx x$, $x>1$ if and only if the ladder height process has finite moment $\E H_1<\infty$. 
Let us observe that the weak lower scaling property with the exponent grater than 1 and $\E X_1=0$ imply that $X_t$ oscillates at infinity (see \cite[Theorem 4.4]{MR1922446}). Therefore one could use \cite[Theorem 8]{MR1894105} to check whenever $\E H_1<\infty$ but unfortunately the proof of equivalence of (3.13) with the other conditions in  \cite[Theorem 8]{MR1894105} is incorrect, since the authors use there two facts that do not hold in their generality. Namely that $\nu(-\infty,0)>0$ (the 11th line from below on the page 208) and the support of the distribution of the process with truncated large jump it is not contained in the negative half-line (the 6th line from below on the page 208). To see that the integral condition (3.13) is improper one can consider spectrally positive L\'{e}vy process with finite second moment. Then $\E H_1<\infty$ but the integral in  (3.13) can not be even considered because the denominator is equal to $0$.  Fortunately, processes, that we are considering, have infinite variation, therefore the second fact holds in our setting always. Hence, if   $\nu(-\infty,0)>0$ one can use \cite[Theorem 8]{MR1894105} if it is not the case, the process is spectrally positive and  $\E H_1<\infty$ if and only if $\E X_1^2<\infty$. Therefore we have complete description, when $V(x)\approx x$, $x>1$. If the tail of the L\'{e}vy measure on the negative half-line dominated the tail on the positive one the integral condition is similar to the condition in the above proposition. 
\begin{proposition}
Let $\Re\psi\in \mathrm{WLSC}(\alpha,\theta)$ with $\alpha>1$ and $\E X_1=0$. Assume that there are $C,R>0$ such that
$\nu(z,\infty)\leq C \nu(-\infty,-z)$, $z>R$. Then
$V(x)\approx x$, $x>1$ if and only if
\begin{equation}\label{eq:1234}\int_1^\infty\frac{\nu(y,\infty)}{h(y)}\frac{dy}{y}<\infty.\end{equation}
\end{proposition}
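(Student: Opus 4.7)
The plan is to reduce to the finiteness of $\E H_1$, then invoke the (corrected) Doney--Maller criterion, and finally simplify the resulting integral under the tail domination.

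First, observe that $V(x)\approx x$ for $x>1$ is equivalent to $\E H_1<\infty$. The upper bound $V(x)\leq 2V(1)x$ on $x\geq 1$ follows from $V(0)=0$ together with subadditivity, so only the lower bound has content; the elementary renewal theorem for the subordinator $H$ gives $V(x)/x\to 1/\E H_1$ as $x\to\infty$, which yields $V(x)\geq c\,x$ on $(1,\infty)$ precisely when $\E H_1<\infty$.

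Second, apply the Doney--Maller criterion. As explained in the discussion preceding the statement, the two technical gaps in the proof of \cite[Theorem 8]{MR1894105} are closed under our hypotheses: WLSC with $\alpha>1$ forces paths of infinite variation (so that, after removing large jumps, the distributions of the truncated process are not concentrated in a half-line), and the domination $\nu(y,\infty)\leq C\nu(-\infty,-y)$ forces $\nu(-\infty,0)>0$ (the spectrally positive subcase having already been addressed separately in the preceding discussion). Hence \cite[Theorem 8]{MR1894105} yields that $\E H_1<\infty$ is equivalent to an integral condition of the form $\int_1^\infty \nu(y,\infty)/A(y)\,dy<\infty$, where $A(y)$ is the Doney--Maller denominator assembled from $\sigma^2$, the truncated second moment of negative jumps, and $y^2\nu(-\infty,-y)$.

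Third, one verifies that under our tail domination $A(y)\approx y^2 h(y)$ for $y>R$. From the identity
$$y^2h(y)=\sigma^2+\int_{|z|\leq y}z^2\,\nu(dz)+y^2\bigl(\nu(y,\infty)+\nu(-\infty,-y)\bigr),$$
each positive-side contribution is controlled by the corresponding negative-side one: the tail term $y^2\nu(y,\infty)$ is directly bounded by $Cy^2\nu(-\infty,-y)$, while for the positive part of the truncated second moment one writes $\int_0^y z^2\nu(dz)=2\int_0^y z\,\nu(z,\infty)\,dz-y^2\nu(y,\infty)$ via integration by parts and applies the domination pointwise to $\nu(z,\infty)$ for $z>R$, with $z\in[0,R]$ absorbed into a constant. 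The finitely many $y\in[1,R]$ are likewise absorbed into the comparability constant, yielding $A(y)\approx y^2h(y)$ on $(1,\infty)$, and hence the equivalence of the two integral conditions.

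The main obstacle is the third step: identifying precisely the Doney--Maller denominator $A(y)$ in \cite[Theorem 8]{MR1894105} and verifying, via the integration-by-parts rewriting of the positive-jump variance, that it is comparable to $y^2 h(y)$ under the tail hypothesis. Once this comparison is established, the rest is a repackaging of known facts, parallel to the proof of Proposition~\ref{prop:creeping}.
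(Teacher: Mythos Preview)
Your plan coincides with the paper's approach: reduce to $\E H_1<\infty$, invoke \cite[Theorem~8]{MR1894105}, and show that the Doney--Maller denominator is comparable to $y^2h(y)$ (whence the paper's integral). The paper also passes through the same preliminary discussion you quote about infinite variation and the validity of \cite[Theorem~8]{MR1894105}.

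There is, however, a genuine gap in your third step. The Doney--Maller denominator in condition (3.13) of \cite{MR1894105} does \emph{not} contain $\sigma^2$; it is (up to a constant) the double integral $\int_0^x\int_y^\infty \nu(-\infty,-z)\,dz\,dy$. Under the tail domination this is comparable to $\int_{\R}|u|(|u|\wedge x)\,\nu(du)$, and using $\int_{|u|>x}|u|\,\nu(du)\leq c\,xh(x)$ (from \cite[Lemma~2.10]{2019_TG_KS}) one only obtains
\[
c_1\bigl(x^2h(x)-\sigma^2\bigr)\ \leq\ \int_0^x\int_y^\infty \nu(-\infty,-z)\,dz\,dy\ \leq\ c_2\,x^2h(x),
\]
not full comparability with $x^2h(x)$. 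The paper closes this $\sigma^2$-defect by a case split: if $\sigma=0$ one is done; if $\sigma\neq 0$ and $\nu(0,\infty)=0$ then $V(x)=cx$ and \eqref{eq:1234} hold trivially; if $\sigma\neq 0$ and $\nu(0,\infty)>0$ then $x^2h(x)-\sigma^2\geq \int_{\R}(1\wedge z^2)\,\nu(dz)>0$ for $x>1$, so $x^2h(x)-\sigma^2\approx x^2h(x)$ on $(1,\infty)$ and one may again apply \cite[Theorem~8]{MR1894105}. By writing $\sigma^2$ into $A(y)$ you have inadvertently skipped this case analysis; as stated, your comparison $A(y)\approx y^2h(y)$ would fail, e.g., for a process with $\sigma>0$ and very light (or absent) negative jumps. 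A minor related point: the tail domination does \emph{not} by itself force $\nu(-\infty,0)>0$ (take $\nu(R,\infty)=0$), so that edge case also needs to be absorbed into the trivial branch rather than dismissed.
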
  
\begin{proof}
Since $\nu(z,\infty)\leq C \nu(-\infty,-z)$, $z>R$ we have, for $x>2R$ 
\begin{align*}\int^x_0\int^\infty_y\nu(-\infty,-z)dz&\approx \int^x_0\int^\infty_y\nu(\{|u|>z\})dz\\
&=\int^\infty_0|y|(|y|\wedge x)\nu(dy).
\end{align*}
Since, by \cite[Lemma 2.10]{2019_TG_KS}  $\int_{|y|>x}|y|\nu(dy)\leq c x h(x)$, the above implies 
$$c_1 (x^2h(x)-\sigma^2)\leq \int^x_0\int^\infty_y\nu(-\infty,-z)dz\leq c_2 x^2h(x),$$
which finish the proof if $\sigma=0$ due to \cite[Theorem 8]{MR1894105}. If it is not the case we have two  possibilities. The first one $\nu(0,\infty)=0$ but then $V(x)=c x$, $x\geq 0$ and \eqref{eq:1234} holds. And if    $\nu(0,\infty)>0$ we have  $x^2h(x)-\sigma^2\geq  \int_{\R}y^2\wedge 1\nu(dz)>0$, $x>1$. That is $x^2h(x)-\sigma^2\approx x^2h(x)$, $x>1$ and the claim is a consequence of \cite[Theorem 8]{MR1894105}.
\end{proof}
\begin{example}
Let $\nu(r,\infty)\leq c\nu(-\infty,-r)/\ln(r+1/r)^{1+\beta}$, $r>0$, for some $c,\,\beta>0$. If a function $x\mapsto x^2\int^{1/x}_0u\nu(-\infty,-u)du$ satisfies the weak scaling property with the exponent greater than 1, then $V(r)\approx r$, $r>0$. Hence, for any $R>0$,
$$\E^x\tau_{(0,R)}\approx \frac{x}{(R-x)h(R-x)},\quad 0<x<R.$$
\end{example}

\bibliographystyle{abbrv}

\end{document}